\author{Matteo Tamiozzo}
\title{Algebraicity of the division points of the trifolium and related topics}
\date{}
\newtheorem{teo}[subsection]{Theorem}     
\theoremstyle{plain}                    
\newtheorem{prop}[subsection]{Proposition}    
\theoremstyle{definition}               
\newtheorem{defin}[subsection]{Definition}
\newtheorem{ex}[subsection]{Example}    
\theoremstyle{remark}                   
\newtheorem{rem}[subsection]{Remark}      
\newcommand{\real}{\mathbf{R}}     
\newcommand{\comp}{\mathbf{C}}     
\newcommand{\nat}{\mathbf{N}}      
\newcommand{\rat}{\mathbf{Q}}      
\newcommand{\integ}{\mathbf{Z}}    
\newcommand{\Addr}{{\bigskip
\footnotesize
\textsc{Department of Mathematics, Imperial College London, London SW7 2AZ, UK}

\textit{Email address:} \texttt{m.tamiozzo@imperial.ac.uk}
}}
\numberwithin{equation}{subsection}
\begin{document}

\maketitle

\tableofcontents

\section{Introduction}

\subsection{} Let $S^1\subset \mathbf{R}^2$ be the circle of center 0 and radius 1 and $P=(1, 0) \in S^1$. The following two statements have been important achievements of 19th century Mathematics. Precisely, the first one was mostly known to Gauss, and its proof was completed by Wantzel in 1837; the second one was established by Lindemann in 1882:
\begin{description}
\item[Algebraicity of division points] For every integer $n \geq 1$ the points dividing $S^1$ into $n$ parts of equal length starting from $P$ are algebraic, and their coordinates generate the abelian extensions $\rat(\zeta_n)$ of $\rat$. Hence the circle can be divided into $n$ parts of equal length with ruler and compass if and only if $n$ is a product of a power of 2 and of distinct Fermat primes.
\item[Transcendence of the length] The length of $S^1$ is transcendental. Hence the circle cannot be rectified with ruler and compass.
\end{description}

\subsection{} It was later shown by Abel that Bernoulli's lemniscate, i.e. the locus of points in the plane such that the product of their distances from the two points $(-\frac{\sqrt{2}}{2}, 0), (\frac{\sqrt{2}}{2}, 0)$ equals $\frac{1}{2}$, also has algebraic division points. The proof is harder than in the case of the circle, and led Abel to develop part of the theory of elliptic functions and to discover the first instance of the phenomenon of complex multiplication. Transcendence of the length of the lemniscate also holds, and one finds a result strikingly similar to the previous one:

\begin{description}
\item[Algebraicity of division points] For every $n \geq 1$ the points dividing Bernoulli's lemniscate into $n$ parts of equal length starting from $(0, 0)$ are algebraic, and their radius generates abelian extensions of $\rat(i)$. Moreover the lemniscate can be divided into $n$ parts of equal length with ruler and compass if and only if $n$ is a product of a power of 2 and of distinct Fermat primes.
\item[Transcendence of the length] The length of Bernoulli's lemniscate is transcendental. Hence the lemniscate cannot be rectified with ruler and compass.
\end{description}

\subsection{} In this note, mostly of an elementary nature, we investigate further instances of the above phenomena. We restricted ourselves to the study of curves arising taking successive images and preimages of the circle $S^1$ via polynomials $P \in \bar{\rat}[T]$. Our first aim is to extend the previous results to the lemniscate ``with three leaves'', i.e. the locus $K$ of points such that the product of their distances from the points $Q_j=\frac{\zeta_3^j}{2^{1/3}}, j=0, 1, 2,$ equals $\frac{1}{2}$. We will prove the following result.

\begin{teo}(see Theorem \ref{teokeipdiv})\label{3leaveintro}
For every $n \geq 1$ the points dividing $K$ into $n$ parts of equal length starting from $(0, 0)$ are algebraic, and their radius generates an extension of degree at most 2 of an abelian extension of $\rat(\zeta_3)$.
\end{teo}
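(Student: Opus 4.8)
The plan is to reduce the division problem to the arithmetic of a single elliptic curve with complex multiplication. First I would carry out the arc length computation. Writing $z = re^{i\theta}$ and using $\prod_{j}(z - Q_j) = z^3 - \tfrac12$, the defining condition $|z^3 - \tfrac12| = \tfrac12$ becomes the polar equation $r^3 = \cos 3\theta$, so that $K$ is a three--petalled rose. A direct computation of $ds^2 = (r^2 + (dr/d\theta)^2)\,d\theta^2$ collapses, thanks to $\cos^2 3\theta + \sin^2 3\theta = 1$, to $ds = d\theta/r^2$, and the substitution $r^3 = \cos 3\theta$ (so $\sin 3\theta = \sqrt{1-r^6}$) turns this into
\[ ds = \frac{dr}{\sqrt{1 - r^6}}. \]
Thus the arc length measured from the origin is the abelian integral $s(r) = \int_0^r (1 - \rho^6)^{-1/2}\,d\rho$. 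The three petals are cyclically permuted by the rotation $z \mapsto \zeta_3 z$, which is defined over $\rat(\zeta_3)$ and preserves the radius; this is what will let me control everything over $\rat(\zeta_3)$ and reduce to one petal.

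Next I would recognize this integral on a curve with complex multiplication. The differential $dx/y$ on the genus two curve $C\colon y^2 = 1 - x^6$ restricts to $ds$ along the real locus, and the degree two map $\pi\colon C \to E$, $(x,y) \mapsto (v,w) = (x^2, xy)$, onto the elliptic curve $E\colon w^2 = v - v^4$ satisfies $\pi^*(dv/w) = 2\,dx/y$. Hence
\[ s(r) = \tfrac12 \int_0^{r^2} \frac{dv}{\sqrt{v - v^4}}, \]
an elliptic integral on $E$. The automorphism $x \mapsto \zeta_3 x$ of $C$ descends to an order three automorphism of $E$, so $E$ has $j$--invariant $0$ and complex multiplication by $\integ[\zeta_3]$. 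Inverting the elliptic integral expresses $v$ as an elliptic function of the arc length parameter, and the points dividing $K$ into $n$ equal parts sit at arc lengths that are rational multiples $\tfrac{k}{n}$ of the total length of $K$; after the bookkeeping relating the total length of the three petals to the real period of $E$, these correspond precisely to the $n$--division values of $v$, i.e. to the $v$--coordinates of $n$--torsion points of $E$ (translated by the base point $v=0$).

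It then remains to invoke the theory of complex multiplication. The $v$--coordinates of the torsion points of the CM elliptic curve $E$ are algebraic and generate abelian extensions of the CM field $\rat(\zeta_3)$, the elliptic analogue of the Kronecker--Weber theorem used for the lemniscate over $\rat(i)$. Finally the radius of a division point is recovered as $r = \sqrt{v}$, so $\rat(\zeta_3)(r) = \rat(\zeta_3)(v)(\sqrt v)$ has degree at most $2$ over the abelian extension $\rat(\zeta_3)(v)$, which is the asserted conclusion. I expect the main obstacle to be precisely this last number--theoretic input: the division polynomials alone only show that the field generated is a \emph{finite} extension, and obtaining the \emph{abelian} structure over $\rat(\zeta_3)$ requires the reciprocity law of complex multiplication together with the CM endomorphisms, exactly as in Abel's and Kronecker's treatment of the lemniscate. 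A secondary but genuine difficulty is the bookkeeping in the previous step --- matching the $\tfrac{k}{n}$ division of the full arc length of $K$ (all three petals, through the quadratic substitution $v = r^2$) with honest $n$--torsion on $E$, and verifying that the sign ambiguity in $r = \pm\sqrt v$ accounts for the only extra quadratic extension.
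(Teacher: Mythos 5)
Your proposal is correct and follows essentially the same route as the paper: you descend the length differential $\mathrm{d}x/y$ on the genus-two curve $y^2=1-x^6$ through the degree-two quotient $(x,y)\mapsto(x^2,xy)$ to an elliptic curve with CM by $\integ[\zeta_3]$, identify the division points with torsion points, invoke the theory of complex multiplication for the abelian extension, and obtain the final quadratic extension from the degree of the quotient map --- exactly the structure of the paper's proof of Theorem \ref{teokeipdiv}. The only difference is cosmetic: the paper phrases the torsion statement via the Albanese map and its functoriality (deliberately avoiding the inversion of the elliptic integral that you propose) and identifies your explicit quotient abstractly as $H_3/\iota\tau$ via a computation in the automorphism group, while you write the quotient map and the CM automorphism down directly.
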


\subsection{} Abel's work on the lemniscate, explained in a refined version in \cite{ros81}, relies on the study of the inverse $\phi$ of the function $l$ sending a number $0<r<1$ to the length of the arc of the lemniscate joining $(0, 0)$ and the point of radius $r$ with positive coordinates (known as \emph{lemniscate sine}, in analogy with the classical sine function). Abel proved that $\phi$ can be analytically continued to a doubly periodic holomorphic function on the complex plane, whose period is a square lattice $\Lambda$. Rosen then relates the radius of a division point to the value of the Weierstrass $\wp$ function at a torsion point of the elliptic curve $\comp/\Lambda$, which has complex multiplication by $\integ[i]$. He finally uses the theory of complex multiplication to conclude. Abel's proof was more down-to-earth, constructing explicit polynomials vanishing at the radius of a division point.

An account of the arithmetic properties of division points of the lemniscate, studied both via class field theory and via the direct analysis of \emph{lemnatomic polynomials}, can also be found in \cite{cohy14}.

\subsection{} Our proof of Theorem \ref{3leaveintro} relies on the observation that, from a modern point of view, inverting the length function $l$ and explicitly studying the properties of the inverse is not necessary. We instead interpret $l$ as an Albanese map on a suitable curve $C_K$, and relate division points of $K$ with points on $C_K$ mapping to torsion points on (a quotient of) its Albanese variety. 

In retrospect, this approach is the abstract version of the calculations done by Abel: the reason why the length integral can be inverted in his situation is that the relevant curve has genus one, hence it is isomorphic to its Albanese variety. In fact, it was exactly from the attempt to invert more general integrals that the theory of Riemann surfaces and their Albanese varieties was born. An overview of the marvellous story leading from the problem of inverting abelian integrals to the introduction of Riemann surfaces and their Jacobians, then to the generalisation of the theory to arbitrary fields motivated by the Weil conjectures, and finally to Grothendieck's construction of Picard and Hilbert schemes, is given in \cite{kle04}, \cite{kle05}.

With this theory in hand, one can avoid explicit computations and make the technique work in other cases. Further examples are discussed in section \ref{sinsp}.

In order for the strategy to succeed one needs that the differential expressing the length of the curve of interest descends to a dimension one quotient of the relevant Albanese variety. We investigate the structure of Albanese varieties which arise when dividing more general curves (Erd\H{o}s lemniscates) in Proposition \ref{structalb}. One finds always abelian varieties with complex multiplication by rings of integers of cyclotomic fields, but these do not map to elliptic curves in most cases.

\subsection{} Besides looking at ``more leaves'', another natural generalisation of the lemniscate that we consider are Cassini ovals. For these curves we show the following weaker result:

\begin{teo}(see Theorem \ref{divcasov})
Let $0<a<1$ be an \emph{algebraic} number and let $C_a=\{z \in \comp: |z^2-a^2|=1\}$. For every $n \geq 1$ there are two points $P_n, P'_n \in C_a(\bar{\rat})$ such that the length of the shortest arc joining them equals $\frac{1}{4n}l(C_a)$.
\end{teo}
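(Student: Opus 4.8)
The plan is to realise the length of $C_a$ as an abelian integral on an auxiliary curve carrying complex multiplication, exactly as in the strategy used for $K$, and then to extract the pair $P_n, P'_n$ from the torsion of the associated abelian variety. First I would push $C_a$ to the circle: the map $z \mapsto w = z^2-a^2$ exhibits $C_a$ as a connected unramified double cover of $S^1$ (the branch point $w=-a^2$ of $z \mapsto z^2$ lies strictly inside $S^1$ because $a<1$), so a single loop of $C_a$ covers $S^1$ twice. Writing $w = e^{i\theta}$ and using $ds = d\theta/(2|z|)$ with $|z|^2 = |w+a^2|$, the factorisation $a^4 + a^2(w+w^{-1}) + 1 = w^{-1}(a^2 w + 1)(w+a^2)$ turns the length element into
\[ ds = \frac{1}{2i}\,\frac{dw}{w^{3/4}\bigl((a^2 w+1)(w+a^2)\bigr)^{1/4}} = \frac{1}{2i}\,\frac{dw}{y}, \]
where $y^4 = w^3(a^2 w+1)(w+a^2)$. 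Equivalently, the substitution $\theta = 2\alpha$ gives $a^4 + 2a^2\cos\theta + 1 = (a^2+1)^2(1-k^2\sin^2\alpha)$ with $k = 2a/(a^2+1)$, so that $\tfrac14\, l(C_a) = (a^2+1)^{-1/2}\int_0^{\pi/2}(1-k^2\sin^2\alpha)^{-1/4}\,d\alpha$. Thus $l(C_a)$ is, up to the factor $1/2i$, a period of the holomorphic differential $\omega = dw/y$ on the superelliptic curve $\mathcal{C}\colon y^4 = w^3(a^2 w+1)(w+a^2)$ (of genus three), and I would interpret arc length as the real part of the corresponding Albanese map $\mathcal{C} \to J(\mathcal{C})$.

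The automorphism $\psi\colon y \mapsto iy$ equips $J(\mathcal{C})$ with an action of $\integ[i]$, so here too one meets complex multiplication by a cyclotomic ring, in accordance with Proposition \ref{structalb}. However --- and this is the source of the weaker conclusion --- the length differential does not descend to an elliptic curve. A fractional-part computation shows that each $\psi$-eigenspace of $H^0(\mathcal{C},\Omega^1)$ is one-dimensional, with eigenvalues $-i,-1,i$; the eigenvalue $-1$ piece is the generic (CM-free) elliptic quotient $\mathcal{C}'\colon Y^2 = w(a^2 w+1)(w+a^2)$ obtained from $\psi^2$, whereas $\omega$ lies in the $(-i)$-eigenspace, which together with the $(+i)$-eigenspace accounts for a two-dimensional CM abelian subvariety on which $\omega$ is not the pullback of a differential from an elliptic curve. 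Consequently the complex periods of $\omega$ generically form a dense (rank two over $\integ[i]$) subgroup of $\comp$ rather than a lattice, and ``arc length $\in \rat\cdot l(C_a)$'' is strictly weaker than ``Albanese image is torsion''. This is exactly why one cannot promote all $n$ equal-division points to algebraic points, in contrast with $S^1$, the lemniscate and $K$.

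What survives is the following supply of algebraic points. Every root of unity $w$ lifts to the algebraic point $z = \sqrt{a^2+w} \in C_a(\bar{\rat})$, and more generally the Albanese preimages of torsion points of $J(\mathcal{C})$ are algebraic and have arc length a rational multiple of $l(C_a)$. For $n=1$ this already gives the four vertices over $w = \pm 1$, namely $\sqrt{a^2+1}$ and $i\sqrt{1-a^2}$, which cut $C_a$ into four isometric quarter-arcs. For general $n$ I would exploit the real structure: although all complex periods of $\omega$ are dense, complex conjugation (which preserves the oval) cuts out a rank-one lattice $\lambda\integ$ of \emph{real} periods, and the $\integ[i]$-action together with the degree-two map to $S^1$ refines its denominator. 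The aim is to produce an on-oval torsion point at arc-distance exactly $\tfrac{1}{4n}\,l(C_a)$ from a vertex, and to take $P_n$ to be this point and $P'_n$ the vertex; the factor $4$ then decomposes as $2$ (the double cover $C_a \to S^1$) times $2$ (the real $2$-torsion responsible for the four vertices).

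The hard part is this last step: pinning the value to \emph{exactly} $\tfrac{1}{4n}$ and guaranteeing that the relevant torsion point is real. Concretely one must compute the real-period lattice $\lambda\integ$ of $\omega$ and determine precisely how $\integ[i]$-multiplication and the covering $C_a \to S^1$ rescale it, so as to show that the on-oval arc lengths arising from torsion are exactly the integer multiples of $\tfrac{1}{4n}\,l(C_a)$ and that two adjacent ones actually occur on the oval. Because $\omega$ does not descend to an elliptic curve, this analysis controls only the CM-protected points and cannot force the intermediate division points to be algebraic; that gap is precisely what separates Theorem \ref{divcasov} from the stronger division statements, and it is the obstacle I expect to dominate the proof.
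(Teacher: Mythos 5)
There is a genuine gap: you correctly diagnose why the naive approach fails (the incomplete arc-length integral of $C_a$, with its nested radical, lives on a higher-genus curve and the relevant differential does not descend to an elliptic quotient, so single division points cannot be reached by a torsion argument), but you never supply the mechanism that actually produces the pair $P_n, P'_n$, and the route you sketch in your last two paragraphs cannot supply it. A point whose integral of the single eigendifferential $\omega$ equals $\tfrac{1}{4n}$ of a real period is \emph{not} thereby a torsion point of the Jacobian when $\omega$ spans (part of) a two-dimensional isogeny factor: torsion requires all holomorphic differentials to integrate to lattice fractions simultaneously, and as you yourself observe the period group of $\omega$ alone is dense. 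So ``the Albanese preimages of torsion points are algebraic and have arc length a rational multiple of $l(C_a)$'' does not convert into ``there is an algebraic point at arc-distance exactly $\tfrac{1}{4n}l(C_a)$ from a vertex''; the step you defer as ``the hard part'' is in fact the whole theorem, and it is not a computation of a real-period lattice.

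The idea you are missing is a symmetrisation trick. Work in polar coordinates on the quarter-oval $0\leq\theta\leq\pi/2$ and consider, for the pair of arcs with angular ranges $(0,u/2)$ and $(\pi/2-u/2,\pi/2)$, the \emph{sum} $I(u)$ of their lengths. After the substitution $\cos t=\pm\sqrt{b}\sqrt{v^{-2}-1}$ (one sign for each arc, $b=(1-a^4)/a^4$), the two nested radicals combine via
\begin{equation*}
\sqrt{\textstyle\sqrt{v^{-2}-1}+v^{-1}}+\sqrt{\textstyle-\sqrt{v^{-2}-1}+v^{-1}}=\sqrt{2}\,\sqrt{\tfrac{v+1}{v}},
\end{equation*}
and $I(u)$ becomes an algebraic multiple of $\int_{\sqrt{1-a^4}}^{v(u)}\frac{\mathrm{d}v}{\sqrt{v(1-v)(v^2-(1-a^4))}}$, an honest integral of $\mathrm{d}x/y$ on the \emph{genus-one} curve $E_a\colon y^2=x(1-x)(x^2-(1-a^4))$. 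Choosing $u$ with $I(u)=\tfrac{n-1}{n}I(\pi/2)$ forces the point of $E_a$ above $v(u)$ to lie in $E_a[2n]$, hence $v(u)$, then $\cos u$, then the two oval points at angles $u/2$ and $\pi/2-u/2$ are algebraic, and the short arc between them has length $\tfrac{1}{n}\cdot\tfrac14 l(C_a)$. This is exactly why the theorem asserts the existence of \emph{two} points whose joining arc has the prescribed length, rather than the algebraicity of all division points: only the sum of the two symmetric arcs is an elliptic integral. Your reduction of $l(C_a)$ to a signature-4 hypergeometric period and your remarks on the CM structure are consistent with the paper's later sections (and with Propositions \ref{structalb} and \ref{transperab}), but they do not prove Theorem \ref{divcasov}.
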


\subsection{} Our calculations, joint with classical and more modern transcendence results, also allow to verify that the length of several (but not all! See section \ref{lensin}) algebraic curves over $\bar{\rat}$ we encounter is transcendental. The most general statement we can prove is the following

\begin{prop}(see Proposition \ref{transgen})
Let $a \in \bar{\rat}$ and $k \geq 1$. Then the length of $C_{a, k}=\{z \in \comp: |z^k-a^k|=1\}$ is transcendental.
\end{prop}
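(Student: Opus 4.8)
The plan is to identify $l(C_{a,k})$ with a nonzero period of an algebraic differential on an abelian variety defined over $\bar{\rat}$, and then to invoke a transcendence theorem for abelian periods. To begin I would turn the arc length into a contour integral over a circle. The substitution $u=z^k$ carries $C_{a,k}$ to the circle $\{|u-a^k|=1\}$, and since $dz=du/(k\,z^{k-1})=du/(k\,u^{(k-1)/k})$ while the arc-length element is $ds=|dz|=|du|/\bigl(k\,|u|^{(k-1)/k}\bigr)$, writing $u-a^k=e^{i\theta}$ gives $|du|=d\theta=du/\bigl(i(u-a^k)\bigr)$ along the circle. Using $\bar u=\bar a^k+(u-a^k)^{-1}$ to eliminate the modulus, $l(C_{a,k})$ becomes, up to an explicit nonzero factor in $\bar{\rat}$ and a finite sum over the arcs of $C_{a,k}$ lying above the circle, a period of the differential
\[
\omega=(u-a^k)^{\eta-1}\,u^{-\eta}\,\bigl(\bar a^k u+1-|a|^{2k}\bigr)^{-\eta}\,du,\qquad \eta=\tfrac{k-1}{2k}.
\]

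Here $\eta$ is rational with denominator dividing $2k$, so $\omega$ is an algebraic differential on the cyclic $2k$-cover $\widetilde{C_{a,k}}$ of $\mathbf{P}^1$ branched over the four points $0,\ a^k,\ (|a|^{2k}-1)/\bar a^k,\ \infty$. Since $a\in\bar{\rat}$ forces $\bar a^k\in\bar{\rat}$ as well, these branch points and the differential $\omega$ are all defined over $\bar{\rat}$, hence so is $\widetilde{C_{a,k}}$ and its Jacobian. By Proposition \ref{structalb} this Jacobian (the relevant Albanese variety) is an abelian variety over $\bar{\rat}$ with complex multiplication by the ring of integers of a cyclotomic field coming from the cyclic action $y\mapsto\zeta_{2k}\,y$. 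Thus $l(C_{a,k})$ is realized as a period of an algebraic differential over $\bar{\rat}$ on a CM abelian variety defined over $\bar{\rat}$, and it is manifestly nonzero because it is a positive length.

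The transcendence then follows from the analytic subgroup theorem of Wüstholz (for $k=1$ the curve is the unit circle and $l=2\pi$, transcendental by Lindemann; for small $k$ one may alternatively cite Schneider's theorem for elliptic periods): a nonzero period of a differential of the first kind defined over $\bar{\rat}$ on an abelian variety over $\bar{\rat}$ is transcendental. The main obstacle I expect is precisely the verification needed to apply this theorem, namely that the class of $\omega$ pushed to the Albanese variety is of the first kind — equivalently that $l(C_{a,k})$ is a genuine, non-degenerate period rather than an accidentally algebraic combination. The exponents $\eta-1$ and $-\eta$ are negative, so one must compute the pole orders of $\omega$ in a local uniformizer at the ramification points above $0,\,a^k,\,\infty$ and above $(|a|^{2k}-1)/\bar a^k$, using the ramification of the cyclic cover to decide holomorphy. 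Should $\omega$ turn out to be of the third kind at some of these points, I would instead appeal to the corresponding period theorem for differentials of the third kind with $\bar{\rat}$-rational residues, after checking that the residues and the chosen cycle produce a non-trivial period; the positivity and reality of the length guarantee that the resulting period does not vanish, which is what ultimately rules out an algebraic value.
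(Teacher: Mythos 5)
Your endgame is the same as the paper's: realise $l(C_{a,k})$ as (an algebraic multiple of) a nonzero period of an algebraic differential on a cyclic $2k$-cover of $\mathbf{P}^1$ defined over $\bar{\rat}$, and then apply W\"ustholz's analytic subgroup theorem. But the route to that cover is genuinely different. The paper works in polar coordinates, applies Matz's substitution $r^{2k}=(1-a^k)^2\sin^2\phi+(1+a^k)^2\cos^2\phi$ to land on the Euler integral $\int_0^1(1-bt)^{-\frac{k-1}{2k}}t^{-1/2}(1-t)^{-1/2}\,\mathrm{d}t$ with $b=\tfrac{4a^k}{(1+a^k)^2}$ (equivalently $l(C_{a,k})=2\pi\,{}_2F_1(\tfrac{k-1}{2k},\tfrac{k-1}{2k},1;a^{2k})$), and then cites Wolfart and Archinard for the fact that this integral between branch points is an algebraic multiple of a nonzero period of $\mathrm{d}x/y$ on $X_{b,k}:y^{2k}=(1-bx)^{k-1}x^k(1-x)^k$. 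You instead push forward by $z\mapsto z^k$ and eliminate $|u|$ via $\bar u=\bar a^k+(u-a^k)^{-1}$, arriving at an analogous $\mu_{2k}$-cover branched at four points without the hypergeometric detour, and your integration cycle is the whole circle $|u-a^k|=1$ rather than a segment joining branch points. The paper's route buys the ${}_2F_1$ formula (reused for Propositions \ref{erdconjspec} and \ref{jactotspl}) and an off-the-shelf reference for the period identification; your route buys the fact that, once you check the lifted circle is a \emph{closed} cycle on the cover, the length is literally a period whose nonvanishing is immediate from positivity. Three corrections. First, the appeal to Proposition \ref{structalb} is a miscitation (that proposition concerns $y^2=1-x^{2p}$, not these covers), but nothing in your argument uses complex multiplication, so simply delete it. Second, your worry about first versus third kind is dissolved by the version of W\"ustholz's theorem the paper quotes (Theorem \ref{alltransc}): \emph{any} nonzero period of \emph{any} rational differential on a smooth projective curve over $\bar{\rat}$ is transcendental, so no pole-order bookkeeping is needed. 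Third, the one verification you must actually carry out is the closedness of the lifted cycle: the circle $|u-a^k|=1$ encloses exactly two branch points (namely $a^k$ together with $0$ when $|a|<1$, or with $(|a|^{2k}-1)/\bar a^k$ when $|a|>1$), where the local exponents of your $\omega$ are $\eta-1$ and $-\eta$; their sum is $-1\in\integ$, so the monodromy of the cover along the circle is trivial and the lift closes up. The degenerate values $|a|\in\{0,1\}$, where branch points collide and the curve meets the origin, need a separate sentence (the paper implicitly covers $a=1$ via Proposition \ref{transcerdos}).
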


\subsection{} We also found the following results (some of which already known) as a byproduct of our investigations:
\begin{enumerate}
\item A proof that Weil restriction along a finite Galois extension of fields preserves finite maps under mild conditions (Proposition \ref{weilresstab}), which we were unable to locate in the literature.
\item A formula for the length of the curves $C_{a, k}$ in terms of hypergeometric functions (see section \ref{lghcak}), already established by other means in \cite{bu91}. In the case $k=2$ we deduce from this formula a very classical relation between the hypergeometric function $_2F_1\left(\frac{1}{4}, \frac{3}{4}, 1, -\right)$ and elliptic integrals, which plays a role in Ramanujan's theory of elliptic functions to alternative bases in signature 4 \cite[33.9]{ber97}.
\item A construction of a family of genus 2 curves over $\bar{\rat}$ whose Jacobian is isogenous to a product of elliptic curves (see Proposition \ref{jactotspl}). Curves with this property have been looked for (at least) since Ekedahl and Serre's question whether there exist curves of arbitrarily high genus with totally split Jacobian \cite{es93}. 
\end{enumerate}

\subsection{Geometric interpretation of elliptic integrals: the work of J.-A. Serret}\label{notforgetserr} The problem of finding a geometric interpretation of elliptic integrals (as well as of other functions such as Euler's Beta function) attracted the attention of several mathematicians in the first half of the 19th century. Among others, Legendre looked for plane algebraic curves other than the lemniscate whose arc lengths could be expressed in terms of elliptic integrals. He succeeded in finding a curve of degree 6 with this property, described in \cite[p. 590]{leg26} (see also \cite{sma13}).

No further examples could be found until the problem was taken up by Joseph Alfred Serret. In a series of papers which appeared between 1842 and 1846 on the ``Journal de Math\'ematiques pures et appliqu\'ees'' \cite{ser42}, \cite{ser43}, \cite{ser43a}, \cite{ser45}, \cite{ser45a}, \cite{ser46}, Serret first expressed certain values of the Beta function as lengths of suitable curves, nowadays known as Erd\H{o}s lemniscates. He subsequently discovered the link between arc lengths of Cassini ovals and elliptic integrals. Serret then went on to find an infinite family of curves whose arc lengths can be expressed in terms of elliptic integrals; he baptised them \emph{elliptic curves} (of the first kind), and gave in the last of the above mentioned papers an elegant geometric construction of these curves.

Serret's work was highly praised at the time: in Liouville's words \cite{liou45}, ``le M\'emoire de M. Serret renferme, comme on voit, des r\'esultats utiles, remarquables''; on his recommendation, Serret's paper \cite{ser45} became part of the \emph{Recueil des Savants \'etrangers}. It seems however that Serret's work was later almost totally neglected. While several of the curves we consider where first studied by Serret, and most of the length computations we use in this note were essentially known to him, they appear to have been repeatedly forgotten and rediscovered in the following 150 years. We will do our best in the body of this note to give precise references and credit to Serret's original work.

\subsection{} Let us now introduce some preliminary definitions and notation. Both for dividing plane curves into equal parts and for defining the Albanese map it is convenient to work with curves with a marked point, hence we give the following:

\begin{defin}
A \emph{pointed algebraic curve} is couple $(X, P)$ where $X$ is an algebraic curve over $\comp$ and $P \in X(\comp)$. If $F$ is a subfield of $\comp$, we say that $(X, P)$ is a pointed algebraic curve \emph{over F} if $C$ is defined over $F$ and $P \in X(F)$.
\end{defin}

\subsection{} In this note we will be mainly interested in plane algebraic curves over $\real$. In precise, modern terms those are reduced closed subschemes of $\mathbf{A}^2_\real$ of dimension one. They need not be irreducible, nor of \emph{pure} dimension one. We will often identify $\comp$ with $\real^2$ and, for a polynomial $P\in \comp[T]$, we will see the induced map $z \mapsto P(z)$ as a map from $\real^2$ to itself. More precisely, $P$ induces a map
\begin{align*}
\real[X, Y]\rightarrow & \real[X, Y]\\
(X, Y) \mapsto & (\mathrm{Re}(P(X+iY)), \mathrm{Im}(P(X+iY)))
\end{align*} 
which yields a map $\tilde{P}: \mathbf{A}^2_\real \rightarrow \mathbf{A}^2_\real$. In scientific terms, this is the Weil restriction from $\comp$ to $\real$ of the map from $\mathbf{A}^1_\comp$ to itself induced by $P$. We will often abuse notation and denote this map simply by $P$, and the image (resp. preimage) of a subvariety $Z$ of $\mathbf{A}^2_\real$ by $P(Z)$ (resp. $P^{-1}(Z)$).

Unless stated otherwise, all the fields in this note are subfields of $\comp$. The field $\bar{\rat}$ is the algebraic closure of $\rat$ in $\comp$.

\subsection{} The figures in this note were realised with the help of C. Tamiozzo, whom the author wishes to thank.

The author was introduced to cyclotomic polynomials and the division of the circle a decade ago by A. Santoro; he later learned about division points of the lemniscate while working at a Master project under the direction of J. Nekov\'{a}\v{r}. He wishes to express his deep gratitude to both of them; this note grew out of the author's curiosity to further explore the landscape he had been introduced to.

The results in this note are (nowadays) of very modest theoretical significance. We hope nonetheless that the reader may benefit from the concrete examples that we analyse, and be motivated to learn more in depth the Mathematics we touch upon in this note - hyperelliptic curves, their periods and Jacobians, complex multiplication, hypergeometric functions and transcendence results.

\newpage

\section{Serret curves}

We will study plane algebraic curves belonging to the following family; in view of \ref{notforgetserr} we decided to name them after J.-A. Serret.

\begin{defin}
Let $S^1=\{(x, y): x^2+y^2=1\}\subset \mathbf{A}^2_\real$ be the unit circle. The family $\mathcal{S}$ of \emph{Serret curves} is the smallest family of curves such that:
\begin{enumerate}
\item $S^1 \in \mathcal{S}$.
\item If $C \in \mathcal{S}$ and $P \in \bar{\rat}[T]$ is non constant then $P^{-1}(C) \in \mathcal{S}$.
\item If $C \in \mathcal{S}$ and $P \in \bar{\rat}[T]$ is non constant then $P(C) \in \mathcal{S}$.
\end{enumerate}
\end{defin}

Let us first of all prove that all the curves in $\mathcal{S}$ are plane \emph{algebraic} curves defined over $\bar{\rat}$.

\begin{prop}
\begin{enumerate}
\item Let $P \in \comp[T]$ be a polynomial of degree at least 1. Then the induced map $\tilde{P}: \mathbf{A}^2_\real \rightarrow \mathbf{A}^2_\real$ is finite.
\item Every curve $C \in \mathcal{S}$ is an algebraic curve defined over $\bar{\rat}$.
\end{enumerate}
\end{prop}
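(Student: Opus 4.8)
The plan is to prove the two statements in sequence, using the first to bootstrap the second via the inductive structure of the family $\mathcal{S}$.

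\textbf{Part (1): finiteness of $\tilde{P}$.} The map $\tilde{P}$ is the Weil restriction from $\comp$ to $\real$ of $z \mapsto P(z)$ on $\mathbf{A}^1_\comp$. The first thing to record is that a morphism of affine varieties is finite precisely when the corresponding ring map makes the target coordinate ring a finitely generated module over the source. On the level of rings, $\tilde{P}$ is induced by the map $\real[X, Y] \to \real[X, Y]$ sending $X \mapsto \mathrm{Re}(P(X+iY))$ and $Y \mapsto \mathrm{Im}(P(X+iY))$. Rather than wrestle with the real and imaginary parts directly, I would base change to $\comp$ and use that $\real[X, Y]\otimes_\real \comp \cong \comp[U, V]$, where $U = X+iY$ and $V = X - iY$ are the two characters of $\comp \otimes_\real \comp$; under this identification the map becomes $U \mapsto P(U)$, $V \mapsto \overline{P}(V)$ where $\overline{P}$ is the polynomial with conjugated coefficients. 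This is visibly finite: $\comp[U, V]$ is free of rank $\deg P \cdot \deg \overline{P} = (\deg P)^2$ over $\comp[P(U), \overline{P}(V)]$, with an explicit module basis $\{U^iV^j : 0 \le i, j < \deg P\}$. Finiteness is faithfully flat local on the base and insensitive to the base change from $\real$ to $\comp$ (a module over a ring is finitely generated iff it becomes so after a faithfully flat extension), so finiteness of $\tilde{P}$ over $\real$ follows. The main subtlety here is to verify carefully that the Weil restriction of $z\mapsto P(z)$ really does correspond to the ring map written in the excerpt, and to keep track of the fact that finiteness (as opposed to, say, quasi-finiteness) must be checked as a module-finiteness statement, not merely fibrewise.

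\textbf{Part (2): every $C \in \mathcal{S}$ is algebraic over $\bar{\rat}$.} Since $\mathcal{S}$ is defined as the smallest family closed under the operations (2) and (3) and containing $S^1$, I would argue by structural induction on the way a curve is built. The base case $S^1 = \{x^2+y^2=1\}$ is an algebraic curve defined over $\rat \subset \bar{\rat}$. For the inductive step, suppose $C$ is an algebraic curve defined over $\bar{\rat}$ and $P \in \bar{\rat}[T]$ is non-constant. For the preimage, $P^{-1}(C)$ is the scheme-theoretic preimage of $C$ under the morphism $\tilde P$ of Part (1), which is defined over $\bar{\rat}$ (since the coefficients of $P$, and hence $\mathrm{Re}$ and $\mathrm{Im}$ parts taken with respect to the fixed $\real$-structure, are algebraic — here I should be slightly careful, as $\mathrm{Re}(P(X+iY))$ and $\mathrm{Im}(P(X+iY))$ have coefficients in $\bar{\rat}$ precisely because the coefficients of $P$ do and $i \in \bar{\rat}$); the preimage of a variety over $\bar{\rat}$ under a morphism over $\bar{\rat}$ is again a variety over $\bar{\rat}$, and it is a curve because $\tilde P$ is finite, hence has zero-dimensional fibres, so dimension is preserved. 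For the image $P(C)$, finiteness of $\tilde P$ is again the key input: a finite morphism is closed, so $P(C)$ is a closed subvariety; since $\tilde P$ has finite fibres it does not raise dimension, and since $C$ is a curve not contracted to a point $P(C)$ is one-dimensional; and the image of a variety over $\bar{\rat}$ under a morphism over $\bar{\rat}$ is defined over $\bar{\rat}$ (the image is cut out by the elimination ideal, which is defined over the base field). Taking reduced structure keeps us within reduced closed subschemes of $\mathbf{A}^2_{\bar{\rat}}$ of dimension one.

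The step I expect to be the genuine obstacle is the finiteness in Part (1), and specifically the bookkeeping of the Weil restriction: once finiteness is in hand, Part (2) is a formal induction in which finiteness supplies exactly the two facts needed, namely that images are closed and that dimension is preserved in both directions. A secondary point to handle with care in Part (2) is the passage between the real and complex pictures: I must ensure that ``defined over $\bar{\rat}$'' for a subvariety of $\mathbf{A}^2_\real$ means defined by polynomials with $\bar{\rat}$-coefficients, and that this property is stable under the operations, which follows because the structural maps $\tilde P$ have $\bar{\rat}$-coefficients whenever $P \in \bar{\rat}[T]$.
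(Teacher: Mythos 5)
Your proposal is correct and follows essentially the same route as the paper: both proofs base change to $\comp$, diagonalise $\real[X,Y]\otimes_\real\comp$ via $U=X+iY$, $V=X-iY$ so that the map becomes $(U,V)\mapsto(P(U),\bar P(V))$, which is visibly finite, and then descend to $\real$; and both deduce Part (2) from closedness and dimension-preservation of finite maps together with the observation that everything is defined over $\bar{\rat}\cap\real$ when $P\in\bar{\rat}[T]$. The only (immaterial) difference is that you descend finiteness by faithfully flat descent of module-finiteness, whereas the paper argues via integrality of the composite $\real[R,I]\hookrightarrow\real[R,I]\otimes_\real\comp\hookrightarrow\real[X,Y]\otimes_\real\comp$ restricted to $\real[X,Y]$, plus finite type.
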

\begin{proof}
Let $R=\mathrm{Re}(P(X+iY))$ and $I= \mathrm{Im}(P(X+iY))$. To prove $(1)$ it is enough to show that the ring extension $\real[R, I]\hookrightarrow \real[X, Y]$ is integral. Let us first show that
\begin{equation}\label{finext}
\real[R, I]\otimes_\real \comp \hookrightarrow \real[X,Y]\otimes_\real \comp
\end{equation}
is integral. Indeed, we have an isomorphism
\begin{align*}
\comp[Z, \bar{Z}]\xrightarrow{\sim} &\real[X,Y]\otimes_\real \comp\\
(Z, \bar{Z}) \mapsto & (X+iY, X-iY)
\end{align*}
whose inverse sends $(X, Y)$ to $(\frac{Z+\bar{Z}}{2}, \frac{Z-\bar{Z}}{2i})$. Under this isomorphism, $R$ (resp. $I$) is sent to $\frac{P(Z)+\bar{P}(\bar{Z})}{2}$ (resp. $\frac{P(Z)-\bar{P}(\bar{Z})}{2i}$), hence $R+iI$ (resp. $R-iI$) goes to $P(Z)$ (resp. $\bar{P}(\bar{Z})$). The extension $\comp[P(Z), \bar{P}(\bar{Z})]\hookrightarrow\comp[Z, \bar{Z}]$ is clearly integral, hence the same is true for \eqref{finext}. Now we have a commutative diagram
\begin{center}
\begin{tikzcd}
\real[X, Y] \arrow[r, hook] & \real[X, Y]\otimes_\real \comp\\
\real[R, I] \arrow[r, hook]  \arrow[u, hook] & \real[R, I]\otimes_\real \comp  \arrow[u, hook].
\end{tikzcd}
\end{center}
The bottom arrow is integral, and we proved that the right one is, hence their composite is. Since the top arrow is injective, $\real[R, I]\hookrightarrow \real[X, Y]$ is integral. This proves (1).

Since finite surjective maps are closed and preserve dimension, (1) immediately implies that every $C \in \mathcal{S}$ is an algebraic curve in $\mathbf{A}^2_\real$. To show that it is defined over $\bar{\rat}$, suppose at first that $P \in F[T]$ with $F \subset \real\cap \bar{\rat}$. Then the above discussion is the base change to $\real$ of the story obtained replacing $\real$ (resp. $\comp$) with $F$ (resp. $F(i)$). If $F \subset \bar{\rat}$ is not contained in $\real$, up to enlarging it we can suppose it contains $i$. Hence, letting $E=F \cap \real$, we have $F=E(i)$ and we can replace $\real$ (resp. $\comp$) with $E$ (resp. $F$). In both cases we see that images and preimages of curves defined over $\bar{\rat}$ are defined over $\bar{\rat}$.
\end{proof}

In fact, the scientific version of the argument in the proof of (1) above yields the following statement.

\begin{prop}\label{weilresstab}
Let $L/K$ be a finite Galois extension of fields of characteristic zero. Let $V$, $W$ be two algebraic varieties over $L$ and $f:V\rightarrow W$ a finite morphism. Assume that every finite set of points of $W$ is contained in an affine open subset (for example, take $W$ quasi-projective). Then the induced morphism
\begin{equation*}
Res_{L/K}f: Res_{L/K}V \rightarrow Res_{L/K}W
\end{equation*}
is finite.
\end{prop}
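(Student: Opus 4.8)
The plan is to prove finiteness after the faithfully flat base change to $L$, where the Weil restriction splits as a product of Galois conjugates and finiteness becomes transparent. Before doing so I would address representability: the hypothesis that every finite subset of $W$ lies in an affine open subscheme is precisely the condition guaranteeing that the functor $Res_{L/K}W$ is representable by a scheme, and since $f$ is finite, hence affine, the preimage of any affine open of $W$ is again affine, so the same property passes to $V$ and $Res_{L/K}V$ exists as well. With both objects in hand I would base change along $\mathrm{Spec}(L)\rightarrow \mathrm{Spec}(K)$, which is finite, flat and surjective, hence faithfully flat and quasi-compact. Because finiteness of a morphism descends along such morphisms (it is the conjunction of affineness and properness, each of which descends), it suffices to show that $(Res_{L/K}f)\times_K L$ is finite.

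To compute this base change I would combine the standard splitting $L\otimes_K L \cong \prod_{\sigma \in \mathrm{Gal}(L/K)} L$ with the base-change formula for Weil restriction. These furnish canonical isomorphisms $(Res_{L/K}V)\times_K L \cong \prod_\sigma V^\sigma$ and $(Res_{L/K}W)\times_K L \cong \prod_\sigma W^\sigma$, the fibre products over $L$ of the Galois conjugates $V^\sigma = V\times_{\mathrm{Spec}(L), \sigma}\mathrm{Spec}(L)$, under which $(Res_{L/K}f)\times_K L$ becomes the fibre product $\prod_\sigma f^\sigma$ of the conjugate morphisms. Each $f^\sigma$ is a base change of the finite morphism $f$, hence finite, and the fibre product over $L$ of finitely many finite morphisms is finite (realise it as a composition of base changes of the individual $f^\sigma$). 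Thus $(Res_{L/K}f)\times_K L$ is finite, and by descent so is $Res_{L/K}f$. This is the scheme-theoretic shadow of the computation in part (1), where tensoring with $\comp$ turned the extension $\real[R, I]\hookrightarrow \real[X, Y]$ into the manifestly integral product situation $\comp[P(Z), \bar{P}(\bar{Z})]\hookrightarrow \comp[Z, \bar{Z}]$.

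The step requiring the most care is this last identification of $(Res_{L/K}f)\times_K L$ with $\prod_\sigma f^\sigma$. One might hope to avoid it by reducing directly to affine $W$ through an affine open cover, but Weil restriction does not commute with open covers of the target — the restrictions of the members of a cover need not cover $Res_{L/K}W$ — so the honest route is the global base change to $L$, whose validity rests precisely on the representability secured by the affineness hypothesis. Once that identification is established, both the finiteness of the product map and the descent of finiteness are routine.
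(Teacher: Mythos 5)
Your proof is correct, but it reaches the conclusion by a route that diverges from the paper's in its second half. You both hinge on the same key identification $(Res_{L/K}f)\times_K L \cong \prod_\sigma f^\sigma$, and you both use the hypothesis on finite sets of points to secure representability (your observation that the hypothesis is inherited by $V$ because $f$ is affine is the right one). The difference is in how finiteness is transported back down to $K$. The paper first reduces to $V,W$ affine by checking that the $Res_{L/K}U$, for $U\subset W$ affine open, form an open cover of $Res_{L/K}W$ with $(Res_{L/K}f)^{-1}(Res_{L/K}U)=Res_{L/K}U'$, and then, in the affine case, descends integrality by the elementary observation that $A$ sits inside $A\otimes_K L$, which is integral (indeed finite) over $B$; finiteness then follows since $r$ is of finite type. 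You instead stay global and invoke fppf descent of finiteness along $\mathrm{Spec}(L)\rightarrow\mathrm{Spec}(K)$, decomposing finite as affine plus proper. Your version is shorter and isolates the role of the hypothesis as purely one of representability, at the cost of quoting descent theory for properties of morphisms; the paper's version is more self-contained, replacing that black box with a two-line commutative diagram of rings, and its open-cover step makes explicit why the naive reduction to an affine cover of $W$ (which, as you rightly note, Weil restriction does not preserve) can nonetheless be salvaged by using \emph{all} affine opens. Both arguments are complete; one small caveat on your phrasing is that the condition on finite sets of points is a sufficient condition for representability (the one under which \cite[Theorem 7.6.4]{bolura90} applies), not a characterisation of it.
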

\begin{proof}
First, we claim that it is enough to show the statement for $V, W$ affine. Indeed, let $U\xrightarrow{\iota} W$ be an affine open and $U'\xrightarrow{\iota '} V$ its preimage via $f$, which is affine since $f$ is finite. Then by \cite[Proposition 7.6.2(i)]{bolura90} $Res_{L/K}U\xrightarrow{Res_{L/K}\iota} Res_{L/K}W$ and $Res_{L/K}U'\xrightarrow{Res_{L/K}\iota '} Res_{L/K}V$ are open immersions. Moreover since Weil restriction commutes with fiber products (being right adjoint) we see that $Res_{L/K}U'=(Res_{L/K}f)^{-1}(Res_{L/K}U)$. Assuming the result for affines, we know that $Res_{L/K}f: Res_{L/K}U'\rightarrow Res_{L/K}U$ is finite. Hence it is enough to verify that $Res_{L/K}W$ is covered by opens of the form $Res_{L/K}U$, with $U\subset W$ affine open. This is true under our assumption that every finite set of points of $W$ is contained in an affine open subset. Indeed in this case the restriction of scalars of $W$ is constructed by gluing the restriction of scalars of \emph{all} affine open subschemes of $W$, as explained in the proof of \cite[Theorem 7.6.4]{bolura90}.

Under the slightly more restrictive assumption that $W$ is quasi-projective, one can also argue using the following description of Weil restriction. Let $G=Gal(L/K)$. Then one has, for any quasi-projective variety $X/L$:
\begin{equation}\label{weilrestr}
Res_{L/K}X\times_K L=\prod_{\sigma \in Gal(L/K)}X^\sigma
\end{equation}
where $X^\sigma=X \times_{L, \sigma}L$ (in fact Weil defined originally restriction of scalars by showing that the above variety descends to a variety over $K$).

Using this description one sees that any point $(x_\sigma)_{\sigma \in Gal(L/K)}$ is contained in an affine open $\prod_\sigma U^\sigma$, where $U\subset X$ is an affine open containing all the points $\sigma(x_\sigma)$.

Let us now suppose $V, W$ affine. After base change to $L$, the morphism $Res_{L/K}f$ is identified with
\begin{equation*}
\prod_{\sigma \in Gal(L/K)}f^\sigma: \prod_{\sigma \in Gal(L/K)}V^\sigma \rightarrow \prod_{\sigma \in Gal(L/K)}W^\sigma
\end{equation*}
which is finite since $f$ is. Letting $Res_{L/K}V=Spec(A), \: Res_{L/K}W=Spec(B)$ and $r: B\rightarrow A$ the map inducing $Res_{L/K}f$, we have a commutative diagram

\begin{center}
\begin{tikzcd}
A \arrow[r, hook] & A\otimes_K L\\
B \arrow[r, hook]  \arrow[u, "r"] & B\otimes_K L \arrow[u, "r\otimes_K L"].
\end{tikzcd}
\end{center}

We proved that $r\otimes_K L$ is finite, hence $A\otimes_K L$ is integral over $B$, so the same holds $A$. Since $r$ is of finite type, it follows that it is finite.
\end{proof}

\begin{rem}
We warn the reader that, as it can easily be seen from the description of Weil restriction reminded after \eqref{weilrestr}, the Weil restriction functor does \emph{not} preserve open covers, even in our situation. A counterexample can be found in \cite[A.5.3]{cogapra15}. The assumption that every finite set of points is contained in an affine open is necessary to guarantee that, at least, the system formed of restrictions of \emph{all} the affine opens of the given scheme covers its restriction of scalars, and implies that the latter is represented by a scheme \cite[Theorem 7.6.4]{bolura90}.
\end{rem}

\begin{ex}
Let us give some examples of  Serret curves which will be studied in this text.
\begin{enumerate}
\item For every $P \in \bar{\rat}[T]$ the curve $P^{-1}(S^1)$ is the level set
\begin{equation*}
P^{-1}(S^1)=\{z \in \comp : |P(z)|=1\}
\end{equation*}
which is called a \emph{polynomial lemniscate}. For $P$ monic it can be described geometrically as the locus of points in the plane whose product of the distances from the roots of $P$ equals 1.
\item If $P$ is monic of degree 2 then $P^{-1}(S^1)$ is a \emph{Cassini oval}, which will be studied in section \ref{casov}. For $P(T)=T^2-1$ one obtains Bernoulli's lemniscate (scaled of a factor of $\sqrt{2}$ with respect to the Introduction).
\item For $P(T)=T^n-1$, $P^{-1}(S^1)$ is the \emph{Erd\H{o}s lemniscate} with $n$ leaves, which we examine in the next section.
\item For $P(T)=(T+1)^n$ one obtains
\begin{equation*}
P(S^1)=\{z \in \comp : |z^{1/n}-1|=1\}
\end{equation*}
which are special cases of \emph{sinusoidal spirals} (see section \ref{sinsp}). One finds all the other sinusoidal spirals pulling back the above ones via polynomials of the form $Q(T)=T^m$, and by inverting the resulting curves with respect to $S^1$ (i.e. taking their image via the map $z \mapsto \frac{1}{\bar{z}}$).
\item Let us point out that ``limits'' of curves in $\mathcal{S}$, though they do not belong to $\mathcal{S}$ in general, nor are they algebraic, are in some cases quite remarkable objects. For example, let $P_0(T)=T$, and define inductively $P_{n+1}(T)=P_n(T)^2+T$. Then the \emph{Mandelbrot lemniscates} $P_n^{-1}(S^1)$ approximate, for $n$ going to infinity, the boundary of the \emph{Mandelbrot fractal}, which is the set of complex numbers $z$ such that the sequence $\{P_n(z)\}_{n \geq 1}$ stays bounded.
\end{enumerate}
\end{ex}

\begin{rem}
For any $P \in \comp[T]$, the curve $P(S^1)$ is clearly irreducible, since it is the image of an irreducible curve. It is less obvious, but true, that the polynomial lemniscate $P^{-1}(S^1)$ is also irreducible \cite{or18}. Notice that this implies that the sinusoidal spirals described above, which are images via $P(T)=T^n$ of $(T^{m}-1)^{-1}(S^1)$, are also irreducible, hence each of them is the vanishing locus of a polynomial with $\rat$-coefficients. Finding it seems a non obvious task. Furthermore, we do not know whether one can characterise irreducible Serret curves.
\end{rem}

\section{Erd\H{o}s lemniscates}

\begin{defin}
Let $n \in \nat$. The \emph{Erd\H{o}s lemniscate with $n$ leaves} is the plane algebraic curve
\begin{equation*}
C_n=\{z \in \comp : |z^n-1|=1\}.
\end{equation*}
In other words, it is the locus of those points $P$ in the plane such that the product of the distances between $P$ and the $n$-th roots of unity equals one. 
\end{defin}

\begin{rem}
The above curves are usually called {Erd\H{o}s lemniscates} because of a conjecture of Erd\H{o}s \cite{erhepi58} predicting that $C_n$ is the longest curve among those of the form $P^{-1}(S^1)$, for $P \in \comp[T]$ monic of degree $n$ (see Proposition \ref{erdconjspec} for a very special case; the general conjecture is wide open). In fact, these curves and their lengths were first studied by Serret in \cite{ser42}, more than a century before Erd\H{o}s stated his conjecture.
\end{rem}

\begin{ex}
\end{ex}
\begin{enumerate}
\item $C_1$ is the circle of center 1 and radius 1.
\item $C_2$ is Bernoulli's lemniscate, depicted below

\begin{figure}[h!]
\begin{center}
\includegraphics[width=0.2\textwidth]{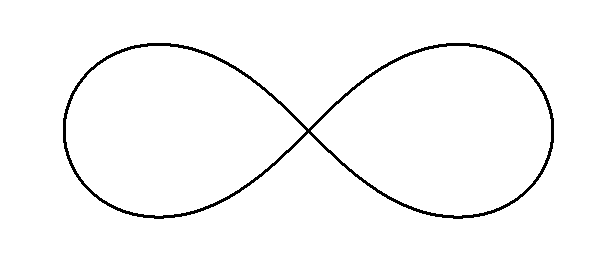}
\caption{The lemniscate of Bernoulli}
\end{center}
\end{figure}

\item $C_3$, the ``lemniscate with three leaves'' is also known as the Kiepert curve:

\begin{figure}[h!]
\begin{center}
\includegraphics[width=0.2\textwidth]{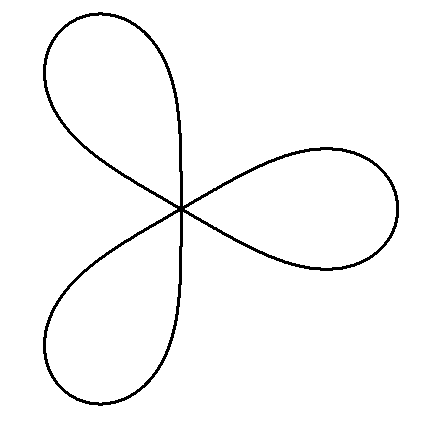}
\caption{The Kiepert curve}
\end{center}
\end{figure}
\end{enumerate}

In general, $C_n$ consists of $n$ equal leaves, cyclically permuted by the map $z \mapsto \zeta_n z$, where $\zeta_n$ is a primitive $n$-th root of unity.

\subsection{Length of an arc} Let $P_0=0$ and let $P=r(P) e^{i \theta(P)}$ be a point on $C_n$ with positive coordinates lying on the leaf crossing the axis $y=0$. Let us calculate the length $l(P)$ of the arc $\stackrel\frown{P_0P}$. The equation of $C_n$ in polar coordinates is
\begin{equation*}
r^n=2cos(n \theta).
\end{equation*}

An elementary computation yields
\begin{equation}\label{lengtherdos}
l(P)=\int\limits_0^{r(P)} \frac{1}{\sqrt{1-\left(\frac{r}{2^{1/n}}\right)^{2n}}} \mathrm{d}r= 2^{1/n} \int\limits_0^{\frac{r(P)}{2^{1/n}}} \frac{1}{\sqrt{1-s^{2n}}} \mathrm{d}s.
\end{equation}

\begin{rem} (cf. \cite{ser42})
The above formula also allows to express the total length of $C_n$ in terms of Euler's Beta function $B(a,b)=\int\limits_0^{1}t^{a-1}(1-t)^{b-1}\mathrm{d}t=\frac{\Gamma(a)\Gamma(b)}{\Gamma(a+b)}$. Indeed
\begin{align}\label{totlengtherdos}
l(C_n)&=2n\int\limits_0^{2^{1/n}} \frac{1}{\sqrt{1-\left(\frac{r}{2^{1/n}}\right)^{2n}}} \mathrm{d}r=2n2^{1/n} \int\limits_0^{1}\frac{1}{\sqrt{1-s^{2n}}} \mathrm{d}s\\
\nonumber &=2^{\frac{1}{n}}\int\limits_0^{1}\frac{1}{(1-u)^{1/2}u^{1-\frac{1}{2n}}}\mathrm{d}s=2^{\frac{1}{n}}B\left(\frac{1}{2}, \frac{1}{2n}\right).
\end{align}
\end{rem}

\subsection{} We are interested in studying the points $P_0, P_1, \ldots, P_{k-1}$ such that $l(\stackrel\frown{P_0P_j})=\frac{j}{k} l(C_n)$ for $j=1, \ldots, k-1$. More precisely, we wish to understand whether these points are algebraic. Letting $P_j=r(P_j)e^{i \theta(P_j)}$, it is enough to study whether $r(P_j)$ is algebraic. Indeed if this is the case then $P_j$ belongs to the intersection of the circle of center the origin and radius $r(P_j)$ and the curve $C_n$, which are algebraic curves defined over $\bar{\rat}$. It follows that $P_j \in C_n(\bar{\rat})$. Furthermore because of the symmetry of $C_n$ we can restrict ourselves to the case $k=2n l$, $l \in \nat$, and only consider the points $P_0, P_1, \ldots, P_{l-1}, P_l=2^{1/n}$ dividing into parts of equal length the arc $\stackrel\frown{P_0P_l}$ lying in the first quadrant. In view of equation \eqref{lengtherdos}, it is enough to study the algebraicity of the real numbers $s_0=0, s_1, \ldots, s_{l-1}$ such that
\begin{equation*}
\int\limits_0^{s_i} \frac{1}{\sqrt{1-s^{2n}}} \mathrm{d}s= i/l \int\limits_0^{1} \frac{1}{\sqrt{1-s^{2n}}}\mathrm{d}s.
\end{equation*}
The idea is to relate, in favourable situations, the numbers $s_i$ to torsion points on (quotients of) the Albanese variety of the hyperelliptic curve $y^2=1-x^{2n}$. We will study it in detail in the next section.
 
\section{Hyperelliptic curves}

\subsection{} For $n \in \nat$ let $E_n$ be the affine curve with equation $y^2=1-x^{2n}$. In the simplest case $n=1$, $E_1$ has a smooth compactification $H_1 \simeq \mathbf{P}^1$ obtained adding the two points at infinity $\infty_1, \infty_2$. The differential $\mathrm{d}x/y$ is meromorphic, with two simple poles at those points.

For $n \geq 2$, $E_n$ has a smooth compactification with two points at infinity. It can be constructed glueing $E_n$ with $E'_n:w^2=v^{2n}-1$ via the map $(x, y)\mapsto (1/x, y/x^n)$. One obtains a smooth projective curve $H_n/\rat$ of genus $n-1$. The differentials $\omega_i=x^i\mathrm{d}x/y, i=0, \ldots, n-2$, extend to holomorphic differentials on $H_n$ and give a basis of $H^0(X_n, \Omega)$.

\begin{prop}\label{Inperiod}
Let $n \geq 2$ and $0\leq i \leq n-2$. Let $I_{n, i}=\int\limits_0^{1} \frac{s^i}{\sqrt{1-s^{2n}}} \mathrm{d}s$. If $i$ is even then $4I_{n, i}$ is a period of $H_n$. In general, $I_{n, i}$ is an algebraic multiple of a period of $H_n$.
\end{prop}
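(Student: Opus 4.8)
The plan is to express the integral $I_{n,i}$ in terms of periods of $H_n$, i.e. integrals of the holomorphic differentials $\omega_j = x^j\,\mathrm{d}x/y$ along closed loops in $H_n(\comp)$. Since $I_{n,i} = \int_0^1 \omega_i$ is an integral along the real segment from $x=0$ to $x=1$ on the branch $y = \sqrt{1-x^{2n}}$, the strategy is to exhibit a closed cycle on $H_n$ that, after accounting for the two-sheeted structure and the behaviour of $y$ across the branch points, traverses this segment a controlled number of times.

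First I would set up the geometry of the hyperelliptic cover $x\colon H_n \to \mathbf{P}^1$, whose branch points are the $2n$-th roots of unity $\zeta_{2n}^k$ (the zeros of $1-x^{2n}$) together with the behaviour at infinity. The segment $[0,1]$ runs from a non-branch point ($x=0$, where the two sheets are separated) to a branch point ($x=1$). The key observation is the symmetry $x \mapsto -x$: on the curve $y^2 = 1-x^{2n}$ the map $\iota\colon (x,y)\mapsto(-x,y)$ is an automorphism, and it acts on the differential $\omega_i = x^i\,\mathrm{d}x/y$ by $\iota^*\omega_i = (-1)^{i+1}\omega_i$. Hence when $i$ is \emph{even}, $\omega_i$ is anti-invariant, so $\int_0^1\omega_i = \int_{-1}^0\omega_i$, and concatenating the segment $[-1,1]$ on one sheet with its return on the other sheet (where $y$ changes sign because we pass through the branch points $x=\pm 1$) produces a genuine closed loop $\gamma$ on $H_n$. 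I expect that integrating $\omega_i$ around this loop gives exactly $4\int_0^1\omega_i = 4I_{n,i}$: the two halves of $[-1,1]$ each contribute $I_{n,i}$ by the symmetry, and the return path on the opposite sheet (where $y \mapsto -y$) contributes the same sign again because the sign flip of $y$ is compensated by traversing in the opposite $x$-direction. This yields the first claim, that $4I_{n,i}$ is a period.

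For general $i$ (including odd $i$), the anti-invariance argument breaks, but one can instead use the substitution $u = x^{2n}$ to reduce $I_{n,i}$ to a Beta-function value, $I_{n,i} = \tfrac{1}{2n}B\!\left(\tfrac{i+1}{2n}, \tfrac12\right)$, and then appeal to the classical theory relating these Beta values to periods via the Jacobi sum / Gauss multiplication formulas; alternatively, and more in keeping with the curve-theoretic approach, one exhibits $I_{n,i}$ as an explicit $\bar{\rat}$-linear combination of the loop integrals $\int_\gamma \omega_i$ over the homology cycles coming from the deck-transformation action of $\mu_{2n}$ (via $x\mapsto \zeta_{2n}x$) on $H_n$. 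Because this $\mu_{2n}$-action permutes the branch points and scales $\omega_i$ by an explicit root of unity $\zeta_{2n}^{i+1}$, the open integral $I_{n,i}$ differs from a closed period only by an algebraic factor built from these roots of unity and a Vandermonde-type inversion. This gives the second claim.

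The main obstacle I anticipate is the careful bookkeeping of signs and sheets: verifying precisely that the closed loop $\gamma$ in the even case picks up the factor $4$ (rather than $2$ or $0$) requires tracking the monodromy of $y$ around the branch points $x=\pm 1$ and confirming the two segment-contributions add rather than cancel. For the general case, the delicate point is to produce the algebraic multiplier explicitly and to ensure the relevant combination of $\mu_{2n}$-translates of the open path closes up into a cycle whose class is nonzero in $H_1(H_n,\integ)$; this amounts to checking a non-degeneracy of the period matrix under the cyclotomic automorphism, which should follow from the fact that $\{\omega_0,\dots,\omega_{n-2}\}$ diagonalises the $\mu_{2n}$-action with distinct eigencharacters.
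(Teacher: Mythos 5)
Your proposal is correct and follows essentially the same route as the paper: for even $i$ the closed loop you describe (the segment $[-1,1]$ traversed on one sheet and returned on the other) is exactly the paper's cycle $\gamma_0-\gamma_1+\gamma_2-\gamma_3$ with period $4I_{n,i}$, and for general $i$ your second alternative --- closing the path using the $\mu_{2n}$-action $x\mapsto\zeta_{2n}x$, which scales $\omega_i$ by $\zeta_{2n}^{i+1}$ --- is precisely the paper's device of replacing $\gamma_2,\gamma_3$ by paths ending at a suitable branch point $(\zeta,0)$, the only point to make explicit being that $\zeta$ must be chosen so that the resulting algebraic factor $1+\zeta^{i+1}$ is nonzero.
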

\begin{proof}
Let us explain the case $i$ even, which includes the case $I_{n, 0}$ of greatest importance for us. Let
\begin{align*}
\gamma_0: [0, 1] & \rightarrow E_n\\
t & \mapsto (t, \sqrt{1-t^{2n}})\\
\gamma_1: [0, 1] & \rightarrow E_n\\
t & \mapsto (t, -\sqrt{1-t^{2n}})\\
\gamma_2: [0, 1] & \rightarrow E_n\\
t & \mapsto (-t, -\sqrt{1-t^{2n}})\\
\gamma_3: [0, 1] & \rightarrow E_n\\
t & \mapsto (-t, \sqrt{1-t^{2n}})
\end{align*}
Then $I_{n, i}=\int_{\gamma_0}\omega_i=\int_{-\gamma_1}\omega_i=\int_{\gamma_2}\omega_i=\int_{-\gamma_3}\omega_i$. Therefore
\begin{equation*}
4I_{n, i}=\int_{\gamma_0-\gamma_1+\gamma_2-\gamma_3}\omega_i
\end{equation*}
is a period, since $\gamma_0-\gamma_1+\gamma_2-\gamma_3 \in H_1(E_n, \integ)$.

For $i$ odd a similar computation works, replacing $\gamma_2$ and $\gamma_3$ with paths ending at $(\zeta, 0)$ instead of $(-1, 0)$, where $\zeta$ is a suitable $2n$-th root of unity.
\end{proof}

\begin{rem}\label{Hn1period}
\begin{enumerate}
\item For $n=1$, $i=0$, the argument in the above proof shows that $4I_{1, 0}=\int_{\gamma_0-\gamma_1+\gamma_2-\gamma_3}\omega_0$, where $\gamma_0-\gamma_1+\gamma_2-\gamma_3 \in H_1(H_1, \integ)$ and $\omega_0=\mathrm{d}x/y \in H^0(H_1, \Omega(\infty_1+\infty_2))$.
\item As in equation \eqref{totlengtherdos}, one can explicitly compute the integral in the above proposition:
\begin{equation}\label{perioderdos}
I_{n, i}=\frac{1}{2n}B\left(\frac{1}{2}, \frac{i+1}{2n}\right).
\end{equation}
\end{enumerate}
\end{rem}

\subsection{(Generalised) albanese varieties} Given a smooth projective curve $X/\comp$, its Albanese variety is the complex torus $Alb(X)=H^0(X, \Omega)^*/H_1(X, \integ)$, where $H^0(X, \Omega)^*=Hom_\comp(H^0(X, \Omega), \comp)$ and the inclusion $H_1(X, \integ)\subset H^0(X, \Omega)^*$ is given by integration of differential forms. The torus $Alb(X)$ is actually an abelian variety and, given a \emph{pointed} algebraic curve $(X, P)$, there is a canonical morphism of algebraic varieties $alb_P: X \rightarrow Alb(X)$, sending a point $Q \in X(\comp)$ to $\omega \mapsto \int\limits_P^{Q} \omega$ (well defined up to an element in $H_1(X, \integ)$). If $(X, P)$ is a pointed algebraic curve over a field $F\subset \comp$, then both $Alb(X)$ and the map $alb_P$ are defined over $F$.
\subsubsection{Functoriality} Let $f: X \rightarrow X'$ be a map of (smooth projective) algebraic curves, inducing a pullback map $f^*:H^0(X', \Omega)\rightarrow H^0(X, \Omega)$ and a pushforward map $f_*:H_1(X, \integ) \rightarrow H_1(X', \integ)$. The adjunction formula
\begin{equation*}
\int_\gamma f^*\omega= \int_{f_*\gamma}\omega
\end{equation*}
implies that the dual of $f^*$ induces a morphism $alb(f): Alb(X)\rightarrow Alb(X')$. Furthermore, if $f(P)=P'$, then $alb(f)$ fits into a commutative diagram
\begin{center}
\begin{tikzcd}
X \arrow[d, "f"] \arrow[r, "alb_P"] & Alb(X)\arrow[d, "alb(f)"] \\
X' \arrow[r, "alb_{P'}"] & Alb(X')
\end{tikzcd}
\end{center}
\begin{ex}\label{alb2}
Let $X=H_2$. Then $X$ has genus one, hence the Albanese map $alb_{(0, 1)}: X \rightarrow Alb(X)$ is an isomorphism. If $0 < t < 1$, then $alb_{(0, 1)}((t, \sqrt{1-t^4}))=\int\limits_0^{t}\frac{1}{\sqrt{1-s^{4}}} \mathrm{d}s$ (under the identification of $H^0(X, \Omega)^*$ with $\comp$ coming from choosing $\omega_0=\mathrm{d}x/y$ as a basis of $H^0(X, \Omega)$).
\end{ex}
\subsubsection{Albanese with modulus (an easy case)} To deal with the situation described in remark \ref{Hn1period} we will need a slightly more general version of the above theory. Let $X$ be a smooth projective complex algebraic curve as above, and let $D$ be an effective divisor on $X$. Let $\Omega(D)$ be the sheaf of meromorphic differentials on $X$ with poles bounded by $D$. The generalised Albanese variety of $X$ with modulus $D$ is the quotient $Alb(X, D)=H^0(X, \Omega(D))^*/H_1(X\setminus |D|, \integ)$, where $|D|$ is the support of $D$. It is an algebraic variety and the choice of a point $P \in X(\comp)\setminus |D|$ determines a map $alb_P: X\setminus |D| \rightarrow Alb(X, D)$ described as before on complex points. Everything descends to $F \subset \comp$ if $X, D$ and $P$ are defined over $F$.

There is a natural surjective map $Alb(X, D)\rightarrow Alb(X)$ whose kernel is an affine algebraic group of dimension $deg(D)-1$. If all the points in the support of $D$ have multiplicity one, then this kernel is isomorphic to $\mathbf{G}_m^{deg(D)-1}$ (see \cite[Section 17, p. 96]{ser12}).

\begin{ex}\label{alb1}
We will only need the following elementary example of generalised Albanese variety: let $D=H_1\smallsetminus E_1 \subset H_1$. Then $Alb(H_1, D)(\comp)=H^0(H_1, \Omega(D))^*/H_1(E_1, \integ) \simeq \comp/ 2\pi i \integ \xrightarrow{exp} \comp ^\times=\mathbf{G}_m(\comp)$. The Albanese map $alb_{(0, 1)}: E_1 \rightarrow \mathbf{G}_m$ is an isomorphism. If $0 < t < 1$ then $alb_{(0, 1)}((t, \sqrt{1-t^2}))=\int\limits_0^{t}\frac{1}{\sqrt{1-s^{2}}} \mathrm{d}s=arcsin(t)$.
\end{ex}

\subsection{Structure of $Alb(H_p)$, $p$ odd prime} Let $p$ be an odd prime. In this section we will determine the structure of the abelian variety $Alb(H_p)$. The key point is that the curve $E_p: y^2=1-x^{2p}$ has a large automorphism group, which forces the Albanese variety $Alb(H_p)$ to be of $CM$-type. Let us remark that $H_p$ is a quotient of the Fermat curve $x^{2p}+y^{2p}=z^{2p}$ hence, for most primes $p$, the structure of its Albanese variety can be deduced from the main result in \cite{ao91} (generalizing \cite{koro78}). The main tool we will use is the same as the one employed in \emph{loc. cit.}, i. e. a criterion for a $CM$ abelian variety to be simple in terms of its $CM$ type which goes back to Shimura-Taniyama \cite[Proposition 26, p. 69]{shta61}. However our situation is simple enough to make further technical details much easier than in the general case studied in \cite{ao91}.

Besides the hyperelliptic involution
\begin{align*}
\iota: E_p & \rightarrow E_p\\
(x, y) & \mapsto (x, -y)
\end{align*}
there is a natural action of $\integ/2p$ on $E_p$ via
\begin{align*}
[k]: E_p & \rightarrow E_p\\
(x, y) & \mapsto (\zeta_{2p}^k x, y).
\end{align*}
In particular, $E_p$ has an additional involution
\begin{align*}
\tau: E_p & \rightarrow E_p\\
(x, y) & \mapsto (- x, y).
\end{align*}
The above maps extend to automorphisms of $H_p$, and the quotient $H_p/\tau$ is isomorphic to the smooth compactification of the affine curve $y^2=1-x^p$.
\begin{prop}\label{structalb}
Let $p$ be an odd prime. Then
\begin{enumerate}
\item The quotient $H_p/\iota \tau$ is isomorphic to $H_p/\tau$.
\item $Alb(H_p)$ is isogenous to $Alb(H_p/\iota\tau)^2$.
\item The abelian variety $Alb(H_p/\iota\tau)$ is a simple abelian variety with $CM$ by $\rat(\zeta_p)$.
\end{enumerate}
\end{prop}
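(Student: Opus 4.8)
The plan is to establish the three assertions in order, deriving (2) and (3) from an explicit plane model of the quotient curve produced in (1). For \emph{Part (1)} I would compute both quotients explicitly. The involution $\iota\tau$ acts by $(x,y)\mapsto(-x,-y)$, so its ring of invariants is generated by $u=x^2$ and $v=xy$, subject to $v^2=x^2y^2=u(1-u^p)$; hence $H_p/\iota\tau$ is the smooth model of $v^2=u(1-u^p)$, a hyperelliptic curve of genus $\frac{p-1}{2}$. On the other hand $H_p/\tau$ is the smooth model of $y^2=1-u^p$ with $u=x^2$, as recorded just before the statement. To identify the two, I substitute $u=1/s$ and $v=w/s^{(p+1)/2}$ (legitimate since $p+1$ is even) in $v^2=u(1-u^p)$, obtaining $w^2=s^p-1$, and then $w=iw'$ turns this into $w'^2=1-s^p$. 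Thus both quotients are isomorphic, over $\rat(i)\subset\bar\rat$, to the smooth model of $y^2=1-s^p$, which proves (1).

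For \emph{Part (2)} the key is that, $\iota$ being the hyperelliptic involution, $\iota^*$ acts as $-1$ on all of $H^0(H_p,\Omega)$. Hence $\tau^*$ and $(\iota\tau)^*=-\tau^*$ have interchanged eigenspaces, and
\[
H^0(H_p,\Omega)=H^0(H_p,\Omega)^{\tau^*=+1}\oplus H^0(H_p,\Omega)^{\tau^*=-1}.
\]
Using the basis $\omega_i=x^i\,\mathrm{d}x/y$ one checks $\tau^*\omega_i=(-1)^{i+1}\omega_i$, so each eigenspace has dimension $\frac{p-1}{2}$; the $+1$-eigenspace is the pullback of $H^0(H_p/\tau,\Omega)$ and the $-1$-eigenspace the pullback of $H^0(H_p/\iota\tau,\Omega)$. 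The idempotents $\frac12(1\pm\tau)$ in $\rat[\tau]$ then split $Alb(H_p)$, up to isogeny, as $Alb(H_p/\tau)\times Alb(H_p/\iota\tau)$, and combining with (1) gives $Alb(H_p)\sim Alb(H_p/\iota\tau)^2$. (For a curve $Alb$ is the Jacobian, so this is the Prym-type decomposition attached to the Klein four-group $\langle\iota,\tau\rangle$, whose remaining quotients $H_p/\iota$ and $H_p/\langle\iota,\tau\rangle$ are rational.)

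For \emph{Part (3)} I would work on the model $y^2=1-u^p$ from (1). The order-$p$ automorphism $\rho:(u,y)\mapsto(\zeta_p u,y)$ embeds $\integ[\zeta_p]$ into $\mathrm{End}(Alb(H_p/\iota\tau))$; since $\dim Alb(H_p/\iota\tau)=\frac{p-1}{2}=\frac12[\rat(\zeta_p):\rat]$, this is CM by the full field $\rat(\zeta_p)$. Reading off the eigenvalues of $\rho^*$ on the basis $u^{j}\,\mathrm{d}u/y$, $0\le j\le\frac{p-3}{2}$, yields the CM type $\Phi=\{1,2,\dots,\frac{p-1}{2}\}\subset(\integ/p)^\times$. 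By the Shimura--Taniyama criterion \cite{shta61} (as used in \cite{ao91}), the abelian variety is simple precisely when $\Phi$ is primitive, i.e.\ when its stabiliser $\{a\in(\integ/p)^\times:a\Phi=\Phi\}$ is trivial.

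The main obstacle is this primitivity, but here it reduces to a one-line piece of elementary number theory, which is exactly the simplification that makes our situation easier than the general Fermat-quotient analysis of \cite{ao91}. If $a\Phi=\Phi$ then $a=a\cdot 1\in\Phi$, so either $a=1$ or $2\le a\le\frac{p-1}{2}$. Assuming $a\ge 2$, set $x_0=\lfloor p/a\rfloor$; then $2\le x_0\le\frac{p-1}{2}$, so $x_0\in\Phi$, while $p-a<a x_0<p$ forces $a x_0\bmod p=ax_0>p-a\ge\frac{p+1}{2}$, whence $ax_0\notin\Phi$. This contradicts $a\Phi=\Phi$, so $a=1$, $\Phi$ is primitive, and $Alb(H_p/\iota\tau)$ is simple with CM by $\rat(\zeta_p)$.
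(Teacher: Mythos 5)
Your proof is correct, and in two of the three parts it takes a genuinely different route from the paper. For (1) the paper does not compute the quotients at all: it exhibits $\tau=b^p$ and $\iota\tau=a^2b^p$ as conjugate elements of the automorphism group $V_{2p}=\langle a,b\rangle$ of $H_p$ by a word computation, so that the action of $a$ directly furnishes the isomorphism $H_p/\tau\simeq H_p/\iota\tau$; your explicit birational identification of $v^2=u(1-u^p)$ with $w'^2=1-s^p$ via $u=1/s$, $v=w/s^{(p+1)/2}$, $w=iw'$ is an equally valid and more concrete substitute, with the added benefit that it hands you the plane model $y^2=1-u^p$ of $H_p/\iota\tau$ that you then reuse in (3) (the paper instead works with $H_p/\tau$ in (3) and invokes (1) to transfer the conclusion). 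Part (2) is essentially the paper's argument: both of you split $H^0(H_p,\Omega)$ into the $\pm 1$-eigenspaces of $\tau^*$ using $\iota^*=-1$ and the basis $x^i\,\mathrm{d}x/y$, and conclude that $q_1^*\oplus q_2^*$ is an isomorphism, hence that the induced map of Albanese varieties is an isogeny. In (3) you and the paper both arrive at the CM type $S=\{1,\dots,(p-1)/2\}$ and appeal to the Shimura--Taniyama simplicity criterion, but you verify primitivity differently: the paper takes the largest $l$ with $m2^l\le(p-1)/2$ and observes that $[2^{l+1}]\in S$ while $[m2^{l+1}]\notin S$, whereas you take $x_0=\lfloor p/a\rfloor$ and show $p-a<ax_0<p$; both are elementary, and yours produces the violating element in one step rather than by iterated doubling. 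Two small points worth a sentence each in a polished write-up: a birational map between smooth projective curves extends to an isomorphism, which is what upgrades your affine substitution in (1); and $aS\subset S$ with $S$ finite forces $aS=S$, which is why your hypothesis $a\Phi=\Phi$ matches the criterion $\{[m]:[m]S\subset S\}=\{1\}$ as stated in the paper.
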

\begin{proof}
The automorphism group of $H_p$ contains the group $V_{2p}=\langle a, b | a^4, b^{2p}, (ab)^2, (a^{-1}b)^2 \rangle$ (in fact this inclusion is an equality according to \cite[Table 1]{mupi17}, but we won't need this), where
\begin{align*}
a: (x,y) \mapsto & \left(\frac{1}{x}, \frac{iy}{x^p}\right) \\
b: (x, y) \mapsto & (\zeta_{2p}x, y).
\end{align*}
We have $\iota=a^2$ and $\tau=b^p$. We claim that $b^p$ and $a^2b^p$ are conjugate in $V_{2p}$. Indeed:
\begin{align*}
abab=1 & \xLeftrightarrow{b^{2p}=1} a^2ba=ab^{2p-1} \Leftrightarrow a^2b=ab^{2p-1}a^{-1}\\
 & \Leftrightarrow a^2b^p=ab^{2p-1}a^{-1}b^{p-1}.
\end{align*}
Now, since $ab=b^{-1}a^{-1}$ and $a^{-1}b=b^{-1}a$, we have
\begin{equation*}
ab^{2p-1}a^{-1}b^{p-1}=ab^{2p-1}b^{-1}ab^{p-2}=ab^{2p-2}b^{-1}a^{-1}b^{p-3}=ab^{2p-3}a^{-1}b^{p-3}.
\end{equation*}
Repeating the above computation we get
\begin{equation*}
ab^{2p-1}a^{-1}b^{p-1}=ab^{2p-3}a^{-1}b^{p-3}=ab^{2p-5}a^{-1}b^{p-5}= \ldots = ab^{2p-p}a^{-1}b^{p-p}=ab^pa^{-1}.
\end{equation*}
It follows that
\begin{equation*}
a^2b^p=ab^pa^{-1}
\end{equation*}
Hence the action of $a$ induces an isomorphism $H_{2p}/\tau \sim H_{2p}/\iota \tau$. This proves $(1)$.

To prove $(2)$ we have to show that the map $H_{2p}\xrightarrow{(q_1, q_2)}H_{2p}/\tau\times H_{2p}/\iota\tau$, where $q_1, q_2$ are the two quotient maps, induces an isogeny $Alb(H_{2p})\rightarrow Alb(H_{2p}/\tau)\times Alb(H_{2p}/\iota \tau)$. For this, is suffices to show that the pullback map on differentials
\begin{equation*}
q_1^*\oplus q_2^*: H^0(H_{2p}/\tau, \Omega)\oplus H^0(H_{2p}/\iota \tau, \Omega)\rightarrow H^0(H_{2p}, \Omega)
\end{equation*}
is an isomorphism. The quotient map $q_1: H_{2p}\rightarrow H_{2p}/\tau$ (resp. $q_2: H_{2p}\rightarrow H_{2p}/\iota \tau$) induces via pullback an isomorphism $H^0(H_{2p}/\tau, \Omega)\xrightarrow{\sim}H^0(H_{2p}, \Omega)^{\tau}$ (resp. $H^0(H_{2p}/\iota \tau, \Omega)\xrightarrow{\sim}H^0(H_{2p}, \Omega)^{\iota\tau}$). Since $\tau(x, y)= (-x, y)$ (resp. $\iota \tau(x, y)= (-x, -y)$), a basis of $\tau$-invariant (resp. $\iota \tau$-invariant) differentials is given by $x\mathrm{d}x/y, x^3\mathrm{d}x/y, \ldots, x^{p-2}dx/y$ (resp. $\mathrm{d}x/y, x^2\mathrm{d}x/y, \ldots, x^{p-3}dx/y$). It follows that $q_1^*\oplus q_2^*$ is in isomorphism, and $(2)$ is proved.

To prove $(3)$ it suffices to show that $Alb(H_{2p}/\tau)$ is a simple abelian variety with $CM$ by $\rat(\zeta_p)$. Since $H_{2p}/\tau$ is the hyperelliptic curve with affine equation $u^2=1-v^p$ there is a natural action of $\integ/p$ on $H_{2p}/\tau$, inducing a map $\integ[\zeta_p]\rightarrow End(Alb(H_{2p}/\tau))$. The action of an element $[k] \in \integ/p$ on $H^0(H_{2p}/\tau, \Omega)=\comp \mathrm{d}u/v \oplus \ldots \oplus \comp u^{(p-3)/2}\mathrm{d}u/v$ is given by the diagonal matrix with entries $(\zeta_p^k, \zeta_p^{2k}, \ldots, \zeta_p^{k(p-1)/2})$. Since $[\rat(\zeta_p):\rat]=p-1=2\: \mathrm{dim} (Alb(H_{2p}/\tau))$ it follows that $Alb(H_{2p}/\tau)$ is a $CM$ abelian variety, with $CM$-type $S=\{[1], [2], \ldots, [(p-1)/2]\}\subset (\integ/p)^\times=Gal(\rat(\zeta_p)/\rat)$ (to be precise, the choice of a primitive $p$-th root of unity $\zeta_p$ induces a canonical bijection between complex embeddings of $\rat(\zeta_p)$ and $Gal(\rat(\zeta_p)/\rat)$). By \cite[Proposition 26, p. 69]{shta61}, $Alb(H_{2p}/\tau)$ is simple if and only if $\{[m] \in (\integ/p)^\times: [m]S\subset S\}=\{1\}$. Suppose there is $[m]\neq [1]$ such that $[m]S\subset S$. Then there exists $l$ such that $m2^l \leq (p-1)/2$ and $p>m2^{l+1} > (p-1)/2$, hence $[2^{l+1}] \in S$ but $[m2^{l+1}] \not \in S$. Therefore $\{[m] \in (\integ/p)^\times: [m]S\subset S\}=\{1\}$, and $(3)$ is proved.
\end{proof}

\subsection{Interlude: transcendence of lengths and periods} For $0 \leq i \leq n-2$ let us set $I_{n, i}=\int\limits_0^{1} \frac{s^i}{\sqrt{1-s^{2n}}} \mathrm{d}s=\frac{1}{2n}B\left(\frac{1}{2}, \frac{i+1}{2n}\right)$. It follows from Proposition \ref{Hn1period} that $B\left(\frac{1}{2}, \frac{i+1}{2n}\right)$ is an algebraic multiple of a period of the algebraic curve $H_n$. The structure of the Albanese $Alb(H_n)$ for $n=p$ odd prime together with consequences of W\"{u}stholz analytic subgroup theorem have the following consequences on the transcendence of $B\left(\frac{1}{2}, \frac{i+1}{2n}\right)$. Both statements in the next proposition have already been established in more generality. We report them here for the sake of completeness, and to illustrate how the general methods work in our concrete example.  We denote by $\langle x_1, \ldots, x_r\rangle_{\bar{\rat}}$ the $\bar{\rat}$-vector space generated by the complex numbers $x_1, \ldots, x_r$.

\begin{prop}\label{transcerdos}
\begin{enumerate}
\item $I_{n, i}$ is transcendental for $0\leq i \leq n-2$. In particular the length $l(C_n)$ is transcendental.
\item Let $n=p$ be an odd prime. Then $\langle I_{p, i}, 0 \leq i \leq p-2 \rangle_{\bar{\rat}}=\langle I_{p, 2j}, 0 \leq j \leq (p-3)/2 \rangle_{\bar{\rat}}$, and the dimension of this $\bar{\rat}$-vector space is $(p-1)/2$.
\end{enumerate}
\end{prop}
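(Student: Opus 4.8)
The plan is to deduce both statements from W\"ustholz's analytic subgroup theorem, applied to the abelian varieties whose periods are the $I_{n,i}$, as identified in Propositions \ref{Inperiod} and \ref{structalb}. For part (1), I would argue that $I_{n,i}$, being (an algebraic multiple of) a nonzero period of the holomorphic differential $\omega_i$ on $H_n$, cannot be algebraic. The clean way to phrase this is via the following consequence of the analytic subgroup theorem, due to W\"ustholz: if $A$ is an abelian variety defined over $\bar{\rat}$ and $\omega$ is a nonzero holomorphic differential on $A$ defined over $\bar{\rat}$, then every nonzero period $\int_\gamma \omega$ (with $\gamma \in H_1(A,\integ)$) is transcendental. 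Since $H_n$ is defined over $\rat$, its holomorphic differentials $\omega_i = x^i\,\mathrm{d}x/y$ are defined over $\rat$, and the pullback to $Alb(H_n)$ of the corresponding invariant differential is defined over $\bar{\rat}$; by Proposition \ref{Inperiod} a suitable algebraic multiple of $I_{n,i}$ is a genuine period, so $I_{n,i}$ itself is transcendental. Taking $i=0$ and recalling from \eqref{totlengtherdos} that $l(C_n)=2n\,2^{1/n}I_{n,0}$ is an algebraic multiple of $I_{n,0}$ gives the transcendence of the length.

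For part (2), the inclusion $\langle I_{p,2j}\rangle_{\bar{\rat}} \subseteq \langle I_{p,i}\rangle_{\bar{\rat}}$ is trivial, so the content is the reverse inclusion together with the dimension count. I would exploit the structure result of Proposition \ref{structalb}: the even-indexed differentials $\omega_0, \omega_2, \ldots, \omega_{p-3}$ are exactly the pullbacks along $q_2$ of a basis of $H^0(H_p/\iota\tau, \Omega)$, i.e. they span the $\iota\tau$-isotypic piece, while the odd-indexed ones $\omega_1, \omega_3, \ldots, \omega_{p-2}$ span the $\tau$-isotypic piece. The action of $\integ[\zeta_p]$ identifies $H^0(H_p/\iota\tau,\Omega)$ with a $\bar{\rat}$-space on which Galois/CM acts through the embeddings in the $CM$-type $S$. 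To relate the odd periods to the even ones, I would use the automorphism $a$ from the proof of Proposition \ref{structalb}, which conjugates $\tau$ to $\iota\tau$ and hence gives an isomorphism $H_p/\tau \xrightarrow{\sim} H_p/\iota\tau$ defined over $\bar{\rat}$; pulling back periods along an isomorphism defined over $\bar{\rat}$ relates the two families of integrals up to algebraic factors, yielding $\langle I_{p,i}\rangle_{\bar{\rat}} = \langle I_{p,2j}\rangle_{\bar{\rat}}$.

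For the dimension statement, the goal is to show these $(p-1)/2$ even periods are linearly independent over $\bar{\rat}$. Here I would invoke the sharper, relative form of the analytic subgroup theorem (as in W\"ustholz's work, or the period relations of Shiga--Wolfart): for a simple abelian variety $A/\bar{\rat}$ of $CM$-type with $\dim A = g$, the periods pairing one fixed cycle against the $g$ holomorphic differentials forming a basis are linearly independent over $\bar{\rat}$, since any $\bar{\rat}$-linear relation would force the existence of a proper abelian subvariety incompatible with simplicity. Applying this to $A = Alb(H_p/\iota\tau)$, which is simple of dimension $(p-1)/2$ with $CM$ by $\rat(\zeta_p)$ by Proposition \ref{structalb}(3), and to the basis $\mathrm{d}u/v, \ldots, u^{(p-3)/2}\mathrm{d}u/v$ whose periods against a fixed cycle are (algebraic multiples of) the $I_{p,2j}$, gives that the $I_{p,2j}$ are $\bar{\rat}$-linearly independent, hence the dimension is exactly $(p-1)/2$.

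The main obstacle I anticipate is the dimension bound in part (2): part (1) and the inclusion of vector spaces are formal consequences of standard transcendence and the isotypic decomposition, but establishing that the $(p-1)/2$ periods are genuinely $\bar{\rat}$-\emph{independent} requires the full strength of the analytic subgroup theorem in its linear-independence formulation, and one must be careful that each $I_{p,2j}$ really is a nonzero period of the distinguished differential $u^{j}\mathrm{d}u/v$ against a \emph{single common} cycle (not merely periods against possibly different cycles), so that simplicity of the $CM$ abelian variety can be brought to bear.
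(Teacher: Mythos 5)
Your part (1) is correct but takes a different route from the paper: the paper simply plugs the explicit evaluation $I_{n,i}=\frac{1}{2n}B\left(\frac{1}{2},\frac{i+1}{2n}\right)$ from \eqref{perioderdos} into Schneider's theorem that $B(a,b)$ is transcendental when $a,b,a+b\in\rat\setminus\integ$, whereas you go through Proposition \ref{Inperiod} and the period-transcendence theorem of W\"ustholz (Theorem \ref{alltransc} in the paper, applied to $H_n$ and $\omega_i$). Both work: Schneider's result is the more classical input and avoids any period interpretation, while your argument does not need the closed-form Beta evaluation. For part (2) your argument is essentially the paper's: both rest on Proposition \ref{structalb} and on the Shiga--Wolfart consequences of the analytic subgroup theorem (Proposition \ref{transperab}) applied to the simple CM abelian variety $Alb(H_p/\iota\tau)$ of dimension $(p-1)/2$.

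There is, however, one step you pass over too quickly: the reverse inclusion $\langle I_{p,i}\rangle_{\bar{\rat}}\subseteq\langle I_{p,2j}\rangle_{\bar{\rat}}$. The isomorphism $H_p/\tau\simeq H_p/\iota\tau$ induced by $a$ does identify each odd period with an algebraic multiple of a period of an even-type differential, but against the \emph{pushed-forward} cycle $a_*\gamma'$, not against the fixed cycle $\gamma=\gamma_0-\gamma_1+\gamma_2-\gamma_3$ underlying the $I_{p,2j}$. So what this gives you is only $I_{p,2j+1}\in V_{Alb(H_p/\iota\tau)}$, the span of \emph{all} periods over \emph{all} cycles, and you still have to show that $V_{Alb(H_p/\iota\tau)}=\langle I_{p,2j}\rangle_{\bar{\rat}}$. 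The linear independence of the $I_{p,2j}$ (your ``relative'' form of the theorem) does not by itself yield this spanning statement. You need in addition either the dimension formula $\dim_{\bar{\rat}}V_A=2\dim(A)^2/\dim_\rat(End(A)\otimes_\integ\rat)=(p-1)/2$ of Proposition \ref{transperab}(2), or, as the paper does, the observation that $\integ[\zeta_p]\cdot\gamma$ has finite index in $H_1(H_p/\iota\tau,\integ)$, so that a period over an arbitrary cycle can be rewritten, up to a rational factor, as a period over the single cycle $\gamma$ of another $\bar{\rat}$-rational differential (move the CM endomorphism onto the differential). Once this is supplied your proof closes; note that with it in hand the independence of the $(p-1)/2$ spanning elements $I_{p,2j}$ follows from the dimension formula by a simple count, so the separate ``relative independence'' statement becomes unnecessary.
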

\begin{proof}
\begin{enumerate}
\item This follows from Schneider's transcendence result \cite{sch41} stating that if $a, b \in \rat$ and $a, b, a+b$ are not integers then $B(a, b)$ is transcendental.
\item We proved in Proposition \ref{structalb} that $Alb(H_p)$ is isogenous to $Alb(H_p/\iota\tau)^2$. It follows that the $\bar{\rat}$-vector space generated by the periods of ($\bar{\rat}$-differentials on) $Alb(H_p)$ coincides with the $\bar{\rat}$-vector space generated by the periods of $Alb(H_p/\iota \tau)$. Let $\gamma \in H_1(H_p/\iota \tau, \integ)$ be any non zero element. Then $\integ[\zeta_p]\cdot \gamma \subset H_1(H_p/\iota \tau, \integ)$ is a $\integ$-lattice of rank $p-1$, hence it has finite index in $H_1(H_p/\iota \tau, \integ)$. It follows that the $\bar{\rat}$-vector space generated by the periods of $Alb(H_p/\iota \tau)$ coincides with the vector space $\langle \int_\gamma \omega, \omega \in H^0(H_p/\iota \tau, \Omega_{\bar{\rat}}) \rangle_{\bar{\rat}}$ for any non-zero $\gamma \in H_1(H_p/\iota \tau, \integ)$. In particular we can take $\gamma=\gamma_0-\gamma_1+\gamma_2-\gamma_3$ as in the proof of Proposition \ref{Inperiod}. The claim now follows from the second point of the following spectacular result of W\"{u}stholz, which is a consequence of the analytic subgroup theorem.
\end{enumerate}
\end{proof}

\begin{prop}(cf. \cite[Proposition 1, 2]{shiwo95})\label{transperab}
\begin{enumerate}
\item Let $A$ and $B$ be abelian varieties defined over $\bar{\rat}$ and $V_A$ (resp. $V_B$) the $\bar{\rat}$-vector space generated by all periods of differentials in $H^0(A, \Omega_{\bar{\rat}})$ (resp. $H^0(B, \Omega_{\bar{\rat}})$). Then $V_A\cap V_B \neq \{0\}$ if and only if there are simple abelian subvarieties of $A$ and $B$ which are isogenous.
\item Let $A$ be a simple abelian variety over $\bar{\rat}$. Then
\begin{equation*}
dim_{\bar{\rat}}V_A=\frac{2\: dim(A)^2}{dim_{\rat}(End(A)\otimes_\integ\rat)}.
\end{equation*}
\end{enumerate}
\end{prop}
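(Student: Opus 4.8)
The plan is to deduce both parts from Wüstholz's analytic subgroup theorem, which is the only non-formal ingredient. Recall its statement: if $G$ is a connected commutative algebraic group over $\bar{\rat}$ with exponential map $\exp_G\colon \mathrm{Lie}(G)(\comp)\to G(\comp)$, and $u\in\mathrm{Lie}(G)(\comp)$ is nonzero with $\exp_G(u)\in G(\bar{\rat})$, then the smallest $\comp$-subspace of $\mathrm{Lie}(G)(\comp)$ that is defined over $\bar{\rat}$ and contains $u$ is the Lie algebra of an algebraic subgroup of $G$ over $\bar{\rat}$. First I would recast periods in these terms. Writing $A(\comp)=\mathrm{Lie}(A)(\comp)/\Lambda$ with $\Lambda=H_1(A,\integ)$ and fixing a $\bar{\rat}$-basis $\omega_1,\dots,\omega_g$ of $H^0(A,\Omega_{\bar{\rat}})$, the numbers $\int_\gamma\omega_i$ are precisely the coordinates of $\gamma\in\Lambda$ in the dual $\bar{\rat}$-basis of $\mathrm{Lie}(A)(\comp)$, so $V_A$ is the $\bar{\rat}$-span of the coordinates of all lattice vectors. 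Since $\exp_A(\gamma)=0\in A(\bar{\rat})$ for every $\gamma\in\Lambda$, the theorem says each nonzero $\gamma$ lies in $\mathrm{Lie}(B_\gamma)(\comp)$ for an abelian subvariety $B_\gamma\subseteq A$ over $\bar{\rat}$, namely the one whose Lie algebra is the minimal $\bar{\rat}$-rational subspace through $\gamma$. This dictionary between lattice vectors and $\bar{\rat}$-rational abelian subvarieties is the engine of everything.

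The core case is that of two simple abelian varieties: if $A,B$ are simple over $\bar{\rat}$ and a nonzero period $\int_{\gamma_A}\omega_A$ of $A$ is a $\bar{\rat}$-multiple $\lambda$ of a nonzero period $\int_{\gamma_B}\omega_B$ of $B$, then $A\sim B$. I would apply the theorem to $G=A\times B$, the cycle $\gamma=(\gamma_A,\gamma_B)\in H_1(G,\integ)$, and the nonzero $\bar{\rat}$-differential $\omega=(\omega_A,-\lambda\omega_B)$, for which $\int_\gamma\omega=0$. The vanishing means $\gamma$ lies in the $\bar{\rat}$-rational hyperplane $\ker\omega$, so by minimality $\mathrm{Lie}(H)\subseteq\ker\omega$ for the subvariety $H=B_\gamma\subseteq G$. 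Because $\gamma_A,\gamma_B\neq 0$, $H$ projects nontrivially to both factors, and simplicity of $A$ and $B$ forces both projections onto be surjective; the relation $\omega|_H=0$ excludes $H=A\times B$, so $H$ is proper and is therefore the graph of an isogeny $A\sim B$.

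For part (2) I would compute $\dim_{\bar{\rat}}V_A$ for $A$ simple, with $D=\mathrm{End}(A)\otimes\rat$ and $g=\dim A$, by a two-sided bound. The upper bound $\dim_{\bar{\rat}}V_A\le 2g^2/\dim_\rat D$ is pure linear algebra: $D$ acts $\bar{\rat}$-rationally on $\mathrm{Lie}(A)$ and integrally on $\Lambda$, and the compatibility $M_\phi\Omega=\Omega R_\phi$ of these actions through the period matrix $\Omega$ produces $\bar{\rat}$-linear relations among its $2g^2$ entries; counting them using the structure of $H_1(A,\rat)$ as a free module over the division algebra $D$ leaves a space of dimension $2g^2/\dim_\rat D$. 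The matching lower bound is where transcendence is indispensable: the analytic subgroup theorem guarantees that these endomorphism relations are the \emph{only} $\bar{\rat}$-linear relations among the periods, so no further collapse occurs. As a check, the formula returns $g$ in the CM case ($\dim_\rat D=2g$) and $2g^2$ when $\mathrm{End}^0(A)=\rat$, matching the classical pictures for CM and generic abelian varieties.

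Finally, part (1) would follow by inclusion–exclusion once part (2) is upgraded to the isogeny-invariant formula $\dim_{\bar{\rat}}V_C=\sum_{[S]}2(\dim S)^2/\dim_\rat\mathrm{End}^0(S)$, the sum running over the isogeny classes $[S]$ of simple factors of $C$ (multiplicities are irrelevant since $V_{C^n}=V_C$, and distinct classes contribute independently because no endomorphism mixes them, as the theorem again confirms). Applying this to $A$, $B$ and $A\times B$, together with $V_A+V_B=V_{A\times B}$, yields $\dim_{\bar{\rat}}(V_A\cap V_B)=\dim V_A+\dim V_B-\dim V_{A\times B}$, which is positive exactly when $A$ and $B$ have a common isogeny class of simple factors, i.e.\ isogenous simple abelian subvarieties. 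The main obstacle throughout is the lower-bound, ``no unexpected relations'' half of the dimension count: this is precisely the deep input of Wüstholz's analytic subgroup theorem, while the upper bounds, the reduction to simple factors, and the concluding inclusion–exclusion are formal.
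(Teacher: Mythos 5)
The paper does not actually prove this proposition: it is imported directly from \cite{shiwo95} as a known consequence of W\"ustholz's analytic subgroup theorem, so there is no internal argument to compare yours against. Your sketch reconstructs, in substance, the standard proof from that reference, and the parts you spell out are correct: the identification of periods with coordinates of lattice vectors, the application of the subgroup theorem to $A\times B$ with the $\bar{\rat}$-rational differential $(\omega_A,-\lambda\omega_B)$, and the observation that a proper connected algebraic subgroup surjecting onto two simple factors is the graph of an isogeny. Two steps are deferred rather than executed, and they carry the real weight. First, an element of $V_A\cap V_B$ is a general $\bar{\rat}$-linear combination of periods, not a single period, so your ``core case'' must be run on $A^n\times B^m$ (a linear combination of periods of $A$ is a single period of a power of $A$); the classification of connected algebraic subgroups of such products via $\mathrm{Hom}(S,S')=0$ for non-isogenous simple $S,S'$ is exactly what justifies the claim that distinct isogeny classes ``contribute independently''. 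Second, the lower bound in (2) likewise needs the subgroup theorem on powers of $A$, describing the possible Lie algebras as $End(A)\otimes\rat$-submodules. Since your inclusion--exclusion derivation of (1) rests on the refined sum formula whose proof already contains (1) for simple varieties, the cleaner logical order is: subgroup theorem, then the direct-sum decomposition of $V_C$ over isogeny classes with the stated dimensions, then both (1) and (2) as corollaries. With that reordering the plan is sound.
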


\begin{ex}
For $p=5$ we find that $B(1/2, 1/10)$ and $B(1/2, 3/10)$ are linearly independent over $\bar{\rat}$, hence their quotient $\frac{\Gamma(1/10)\Gamma(4/5)}{\Gamma(3/10)\Gamma(3/5)}$ is transcendental. On the other hand we have 
\begin{equation*}
B(1/2, 1/10)/B(1/2, 2/5)=\frac{\Gamma(\frac{1}{10})\Gamma(\frac{9}{10})}{\Gamma(\frac{2}{5})\Gamma(\frac{3}{5})}=\frac{\sin(2\pi/5)}{\sin(\pi/10)}=\sqrt{5+2\sqrt{5}}
\end{equation*}
(using that $\Gamma(z)\Gamma(1-z)=\frac{\pi}{\sin(\pi z)}$ for $z \not \in \integ$) and similarly $B(1/2, 1/5)/B(1/2, 3/10)=\sqrt{1+\frac{2}{\sqrt{5}}}$, hence $B(1/2, 1/5)$ and $B(1/2, 2/5)$ are in the $\bar{\rat}$-span of $B(1/2, 1/10)$ and $B(1/2, 3/10)$.

In general, using a similar idea Wolfart and W\"{u}stholz have shown that the only linear relations with $\bar{\rat}$-coefficients between values of the Beta function at rational parameters are given by the Deligne-Koblitz-Ogus relations \cite{wowu85}.
\end{ex}

\section{Algebraicity of division points of \texorpdfstring{$C_n, 1\leq n \leq 3$}{the trifolium}}
In this section we apply the previous results to show that the division points of $C_1, C_2$ and $C_3$ are algebraic. Of course the first two facts can be proved by more elementary means, and were already known to Gauss and Abel; however our strategy works uniformly in all the three cases.
\subsection{Gauss: division of the circle}
As discussed above, to prove that the division points of $(C_1, 0)$ are algebraic it is enough to show that, for $l \geq 1$, the real numbers $s_0=0, s_1, \ldots, s_{l-1}$ such that
\begin{equation*}
\int\limits_0^{s_i} \frac{1}{\sqrt{1-s^{2}}} \mathrm{d}s= i/l \int\limits_0^{1} \frac{1}{\sqrt{1-s^{2}}}\mathrm{d}s
\end{equation*}
are algebraic. By example \ref{alb1} and remark \ref{Hn1period}, the image of the point $(s_j, \sqrt{1-s_j^2}) \in E_1(\comp)$ via the Albanese map is a torsion point in $\mathbf{G}_m$, hence it is algebraic. It follows that the same holds for $\left(s_j, \sqrt{1-s_j^2}\right)$, hence for $s_j$. Better, since $4l$-torsion points of $\mathbf{G}_m$ are defined over $\rat(\zeta_{4l})$, one deduces that $s_j \in \rat(\zeta_{4l})$.

\begin{rem}
It may appear confusing that we are dividing the circle into $2l$ parts and the field $\rat(\zeta_{4l})$ shows up. The point is that, for example, $s_1$ is the radius of the first division point of $C_1$. For $l=2$ we get $s_1=\sqrt{2}/2$ which indeed belongs to $\rat(\zeta_8)$ and not to $\rat(\zeta_4)$. On the other hand $P_1$ has coordinates $(1,1)$, i.e. it is the complex point $1+i \in \rat(\zeta_4)$. Hence this argument alone does \emph{not} give the optimal field of definition of the points $P_j \in C_1(\bar{\rat})$. However, since $P \in C_1(\comp)$ is constructible with ruler and compass if and only if $r(P)$ is constructible, and $Gal(\rat(\zeta_{4l})/\rat)$ is a $2$-group if and only if $Gal(\rat(\zeta_{l})/\rat)$ is, one recovers Gauss' theorem on the divisibility of the circle with ruler and compass.
\end{rem}

\subsection{Abel: division of the lemniscate}
In this case we are interested in the algebraicity of the real numbers $s_0=0, s_1, \ldots, s_{l-1}$ such that
\begin{equation*}
\int\limits_0^{s_i} \frac{1}{\sqrt{1-s^{4}}} \mathrm{d}s= i/l \int\limits_0^{1} \frac{1}{\sqrt{1-s^{4}}}\mathrm{d}s.
\end{equation*}
By example \ref{alb2} and Proposition \ref{Inperiod} we see that $alb_{(0,1)}((s_i, \sqrt{1-s_i^4})) \in Alb(H_2)[4l]$, Since the Albanese map is an isomorphism, we deduce that $s_i$ is algebraic. Better, since $H_2$ is the elliptic curve with $CM$ by $\integ[i]$, we see that $s_i$ belongs to the ray class field of $\rat(i)$ with modulus $4l$, whose Galois group over $\rat(i)$ is isomorphic to $(\integ[i]/4l)^\times$. One deduces from this an analogue of Gauss' theorem regarding the divisibility of Bernoulli's lemniscate with ruler and compass stated in the introduction.

\subsection{Division of the Kiepert curve}
We are now interested in the algebraicity of the real numbers $s_0=0, s_1, \ldots, s_{l-1}$ such that
\begin{equation}\label{intddivide}
\int\limits_0^{s_i} \frac{1}{\sqrt{1-s^{6}}} \mathrm{d}s= i/l \int\limits_0^{1} \frac{1}{\sqrt{1-s^{6}}}\mathrm{d}s.
\end{equation}
In this case the hyperelliptic curve $H_3$ has genus 2, and by Proposition \ref{structalb} its Albanese variety is isogenous to $Alb(H_3/\iota\tau)^2$, where $Alb(H_3/\iota\tau)$ is isomorphic to the elliptic curve $Alb(H_3/\tau)$ with $CM$ by $\integ[\zeta_3]$. We have a commutative square
\begin{center}
\begin{tikzcd}
H_3 \arrow[d, "q"] \arrow[r, "alb_{(0, 1)}"] & Alb(X)\arrow[d, "alb(q)"] \\
H_3/\iota \tau \arrow[r] & Alb(H_3/\iota \tau)
\end{tikzcd}
\end{center}
where $q$ is the quotient map and the bottom arrow is an isomorphism because $H_3/\iota \tau$ has genus one. The differential form $\omega_0=\mathrm{d}x/y\in H^0(H_3, \Omega)$ is $\iota \tau$-invariant, therefore there is a unique differential form $\tilde{\omega} \in H^0(H_3/\iota\tau, \Omega)$ such that $q^*(\tilde{\omega})=\omega$. The map $t \mapsto \int\limits_0^{t} \frac{1}{\sqrt{1-s^{6}}} \mathrm{d}s$ gets identified with the restriction to $\{(t, \sqrt{1-t^6}), t \in [0, 1]\}$ of the map
\begin{align*}
alb(q)\circ alb_{(0,1)}: H_3 \rightarrow & Alb(H_3/\iota \tau)\\
P \mapsto q(P) \mapsto & \int\limits_{q((0,1))}^{q(P)}\tilde{\omega}.
\end{align*}
By Proposition \ref{Inperiod} we have $4(alb(q)\circ alb_{(0,1)}((1,0)))=0$. In fact, since the path $\gamma_0 - \gamma_3$ is mapped to a cycle in $H_1(H_3/\iota \tau, \integ)$ via $q$, the equality $2(alb(q)\circ alb_{(0,1)}((1,0)))$ holds. Therefore, if $s_i$ satisfies equation \eqref{intddivide}, then $alb(q)\circ alb_{(0,1)}(s_i, \sqrt{1-s_i^6}) \in Alb(H_3/\iota\tau)[2l]$. Since $q$ has degree 2 and the bottom arrow is an isomorphism we deduce that $s_i$ is algebraic, belonging to a degree 2 extension of the field generated by the $2l$-torsion points of the elliptic curve with $CM$ by $\integ[\zeta_3]$. We have proved
\begin{teo}\label{teokeipdiv}
Let $l \geq 1$, $P_0=0, P_1=r(P_1)e^{i\theta(P_1)}, \ldots, P_{6_l-1}=r(P_{6l-1})e^{i\theta(P_{6l-1})}$ be the $6l$-division points of $(C_3, 0)$. Then the numbers $r(P_i)/2^{1/3}$ are algebraic, belonging to an extension of degree at most 2 of the ray class field of $\rat(\zeta_3)$ with modulus $2l \integ[\zeta_3]$. In particular, the points $P_i$ belong to $C_3(\bar{\rat})$.
\end{teo}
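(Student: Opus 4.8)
The plan is to realise the length function of $C_3$ as an Albanese map landing on an elliptic curve with complex multiplication by $\integ[\zeta_3]$, and then to feed the resulting torsion points into complex multiplication theory.

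First I would reduce the statement to the algebraicity of the real numbers $s_i$ solving \eqref{intddivide}, together with a description of the field they generate. By the symmetry of $C_3$ under $z\mapsto \zeta_3 z$ and complex conjugation, dividing $(C_3,0)$ into $6l$ equal arcs reduces to dividing the first-quadrant arc from $(0,0)$ to $2^{1/3}$ into $l$ equal pieces, and by \eqref{lengtherdos} the radii of the resulting points are exactly $2^{1/3}s_i$. Once each $s_i$ is shown to be algebraic, $P_i$ lies on the intersection of $C_3$ with the circle of radius $2^{1/3}s_i$, both defined over $\bar{\rat}$, so $P_i\in C_3(\bar{\rat})$.

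Next I would transport the integral to the curve $H_3$. On the genus-two curve $E_3:y^2=1-x^6$ the differential $\omega_0=\mathrm{d}x/y$ is invariant under $\iota\tau:(x,y)\mapsto(-x,-y)$, hence descends along the degree-two quotient $q:H_3\to H_3/\iota\tau$ to a form $\tilde\omega$ with $q^*\tilde\omega=\omega_0$. Since $H_3/\iota\tau$ has genus one its Albanese map is an isomorphism onto the elliptic curve $Alb(H_3/\iota\tau)$, which by Proposition \ref{structalb} has $CM$ by $\integ[\zeta_3]$. Functoriality of the Albanese construction then identifies, on the real arc $\{(t,\sqrt{1-t^6})\}$, the map $t\mapsto\int_0^t \frac{\mathrm{d}s}{\sqrt{1-s^6}}$ with $alb(q)\circ alb_{(0,1)}$ followed by the isomorphism to $Alb(H_3/\iota\tau)$. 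A crucial bookkeeping point is that the chain $\gamma_0-\gamma_3$ appearing in the proof of Proposition \ref{Inperiod} becomes a closed loop after applying $q$, so that $2\bigl(alb(q)\circ alb_{(0,1)}((1,0))\bigr)=0$; this improves the naive factor $4$ to $2$. It then follows that any $s_i$ solving \eqref{intddivide} maps into $Alb(H_3/\iota\tau)[2l]$. By the theory of complex multiplication the $2l$-torsion of an elliptic curve with $CM$ by $\integ[\zeta_3]$ generates the ray class field of $\rat(\zeta_3)$ of modulus $2l\,\integ[\zeta_3]$, and is in particular algebraic; inverting the isomorphism lands the image in $H_3/\iota\tau$ over that ray class field, and pulling back along the degree-two map $q$ produces $(s_i,\sqrt{1-s_i^6})$ over an extension of degree at most $2$, whence the claim on $r(P_i)/2^{1/3}=s_i$.

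The main obstacle is the descent step together with the torsion bookkeeping: one must check that $\omega_0$ is genuinely $\iota\tau$-invariant and descends to the elliptic quotient — this is precisely what lets a genus-two problem behave like the genus-one lemniscate of Abel — and one must verify that the relevant half of the symmetric cycle $\gamma_0-\gamma_1+\gamma_2-\gamma_3$ is collapsed by $q$, so as to obtain $2l$- rather than $4l$-torsion. Extracting the exact ray class field from the torsion is then a standard, but indispensable, application of $CM$ theory.
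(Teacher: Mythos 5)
Your proposal follows the paper's argument essentially step for step: the reduction to the numbers $s_i$ via the symmetry of $C_3$, the descent of $\omega_0=\mathrm{d}x/y$ along the degree-two quotient $q:H_3\to H_3/\iota\tau$ onto the genus-one curve with $CM$ by $\integ[\zeta_3]$, the improvement from $4$- to $2$-torsion via the observation that $q(\gamma_0-\gamma_3)$ is a cycle, and the final appeal to $CM$ theory with the degree-two loss from inverting $q$. This is the same proof as in the paper; no gaps.
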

\begin{rem}
In this case one does \emph{not} obtain an analogue of Gauss' theorem on the division of the circle with ruler and compass. Besides the problem that $r(P_i)=2^{1/3}s_i$ and $2^{1/3}$ is not constructible, which can be circumvented replacing $C_3$ by $2^{-1/3}C_3$, one sees that $(\integ[\zeta_3]/p)^\times$ is never a $2$-group.
\end{rem}

\begin{rem}
It is natural to wonder at this point whether the division point of $(C_n, 0)$, for $n > 3$, are algebraic. The argument given above relies on the fact that, for $n\leq 3$, the differential giving the length of arcs of $C_n$ descends to a differential on an isogeny factor of $Alb(H_n)$ of dimension 1. For $n=4$ the relevant Albanese variety splits up to isogeny as $E_1\times E_2^2$, where $E_1$ (resp. $E_2$) is the elliptic curve with $CM$ by $\integ[i]$ (resp. $\integ[\sqrt{-2}])$ (see \cite[Section 4.1.7]{paul06}). However the length differential does not descend to these elliptic curves.

For $n=p$ prime larger than 3, Proposition \ref{structalb} shows that $Alb(H_p)$ has no isogeny factor of dimension one. In view of Proposition \ref{transperab}, the length of $C_n$ \emph{cannot} be expressed as an elliptic integral (no matter which change of variables one tries to make), hence our arguments do not apply.
\end{rem}

\section{Algebraicity of division points: further examples}\label{sinsp}

\subsection{Sinusoidal spirals} The method described above adapts to show that other (pointed) algebraic curves  $C \in \mathcal{S}$ have algebraic division points. Let us consider for example the family of \emph{sinusoidal spirals}, i.e. plane curves with polar equation
\begin{equation*}
C_q: r^q=2cos(q\theta).
\end{equation*}
For every $q \in \rat$ those are algebraic curves over $\bar{\rat}$. Since $C_{-q}$ is the inverse of $C_{q}$ (with respect to $S^1$) it is enough to show this for $q=\frac{a}{b}$ with $a, b$ positive, coprime integers. Then $C_{\frac{a}{b}}$ belongs to $\mathcal{S}$, since it is the preimage of $C_\frac{1}{b}$ via the map $z \mapsto z^a$ and $C_{\frac{1}{b}}$ is the image of $C_1$ via $z \mapsto z^b$.

For $q=\frac{a}{b}$ as above the curve $C_q$ is composed of a $a$ ``leaves'' obtained by rotations with angles $\frac{2k\pi}{q}, 0 \leq k \leq a-1$, from the leaf $L$ whose points $P=(r(P), \theta(P))$ satisfy $-\frac{\pi}{2q}\leq \theta(P) \leq \frac{\pi}{2q}$. The latter is symmetric with respect to the $x$-axis and our calculation \eqref{lengtherdos} adapts in this setting to show that the length of an arc from $0$ to a point $P \in L$ with positive coordinates equals
\begin{equation*}
l(P)=\int\limits_0^{r(P)} \frac{1}{\sqrt{1-\left(\frac{r}{2^{1/q}}\right)^{2q}}} \mathrm{d}r= 2^{1/q} \int\limits_0^{\frac{r(P)}{2^{1/q}}} \frac{1}{\sqrt{1-s^{2q}}} \mathrm{d}s.
\end{equation*}

For $q=3/2$ one obtains the integral $\int\limits_0^{\frac{r(P)}{2^{1/q}}} \frac{1}{\sqrt{1-s^{3}}}\mathrm{d}s$. Since the (compactification of the) curve $y^2=1-x^3$ has genus one, a variation of the argument in the previous section applies to show that division points on $(C_{3/2}, 0)$ are algebraic (and are again related to torsion points on the elliptic curve with $CM$ by $\integ[\zeta_3]$).

\begin{figure}[h!]
\begin{center}
\includegraphics[width=0.2\textwidth]{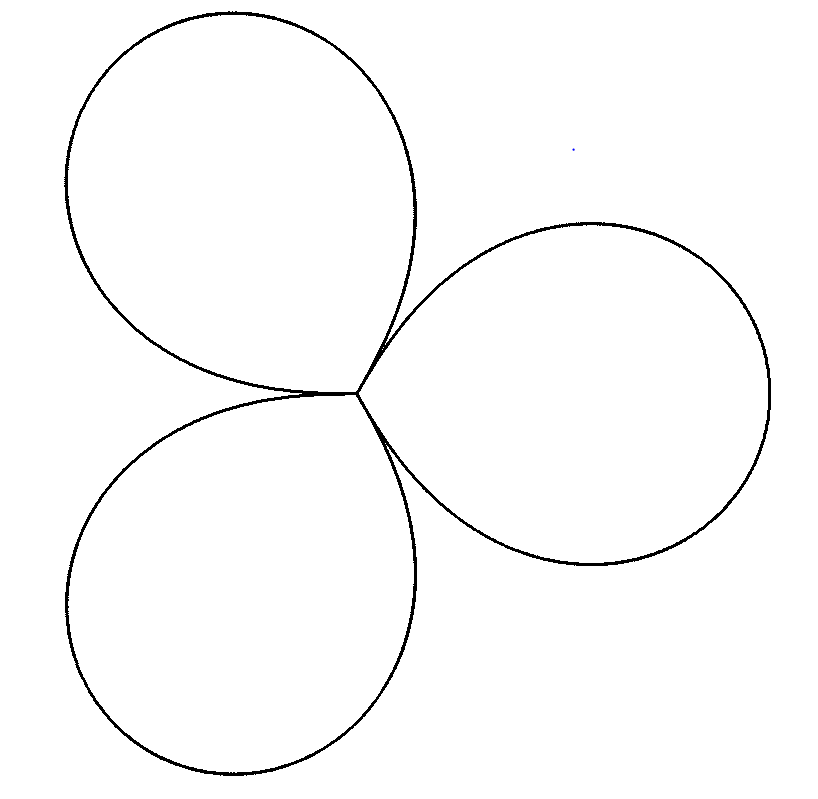}
\caption{The sinusoidal spiral $C_{\frac{3}{2}}$}
\end{center}
\end{figure}

\subsection{} In general, the argument we gave shows that division points on a pointed algebraic curve $(C, P)$ over $\bar{\rat}$ are algebraic whenever the integral giving the length of arcs of $C$ (starting from $P$) can be reduced to an elliptic integral (of the first kind). As we mentioned in the introduction, Serret discovered in 1845 \cite{ser45} an infinite family of curves having this property, of which the lemniscate is an example. He called them \emph{elliptic curves of the first kind}, and further studied them in \cite{ser45a}. One year later \cite{ser46} he found a remarkable geometric construction of these curves. This construction is also explained in \cite[Section 3.3, 4.5]{praso97}.

\subsection{Cardioid} Finally let us record the easiest example (as far as we know) of a curve with algebraic division points. It is the \emph{cardioid}, i.e. the sinusoidal spiral with parameter $q=\frac{1}{2}$. In this case one obtains, for $P$ in the upper half plane:
\begin{equation*}
l(P)=4\int\limits_0^{\frac{r(P)}{4}} \frac{1}{\sqrt{1-s}} \mathrm{d}s=-8\left(\sqrt{1-\frac{r(P)}{4}}-1\right)
\end{equation*}
and the total length is $l(C_{1/2})=16$. Algebraicity of division points follows immediately.

Let us also remark that the cardioid is an example of algebraic curve over $\bar{\rat}$ with \emph{algebraic} length.

\subsection{Length of sinusoidal spirals}\label{lensin} In fact, we can determine which of the sinusoidal spirals $C_q, \: q=\frac{a}{b} >0$ have transcendental length. Indeed, by the above discussion the total length of $C_q$ equals

\begin{equation*}
l(C_q)= 2a 2^{\frac{1}{q}} \int\limits_0^{1} \frac{1}{\sqrt{1-s^{2q}}} \mathrm{d}s=b2^{\frac{1}{q}}B\left(\frac{1}{2}, \frac{1}{2q}\right).
\end{equation*}

Therefore
\begin{enumerate}
\item If $q=\frac{1}{2k+1}, k \in \nat$ then $B(\frac{1}{2}, \frac{2k+1}{2})=\frac{\Gamma(1/2)\Gamma(1/2+k)}{\Gamma(k+1)}=\frac{\pi \cdot (1/2)(1/2+1)\cdots (k-1/2)}{k!}$. In particular $l(C_q)$ is transcendental.
\item If $q=\frac{1}{2k}, k \in \nat$, then $B(\frac{1}{2}, k)=\frac{(k-1)!}{(1/2)(1/2+1)\cdots (k-1/2)}$. In particular in this case $l(C_q)$ is algebraic.
\item For every other value of $q$ we have that $\frac{1}{2}$, $\frac{1}{2q}$ and $\frac{q+1}{2q}$ are not integers, hence $l(C_q)$ is transcendental because of the already mentioned result of Schneider.
\end{enumerate}

\section{Division points of Cassini ovals}\label{casov}

\subsection{Cassini ovals} Let us now turn our attention to Cassini ovals, i.e. curves of the form $C_P=\{z \in \comp : |P(z)|=1\}$ where $P \in \comp[T]$ is monic of degree 2. Up to a linear change of coordinates we can assume that $P$ is of the form $P(T)=T^2-a^2$ where $a$ is a positive real number. We will denote the corresponding curve by $C_a$ and assume in this section that $0<a<1$. In this case $C_a$ is connected (in the analytic topology).

\begin{figure}[h!]\label{cassov}
\begin{center}
\includegraphics[width=0.2\textwidth]{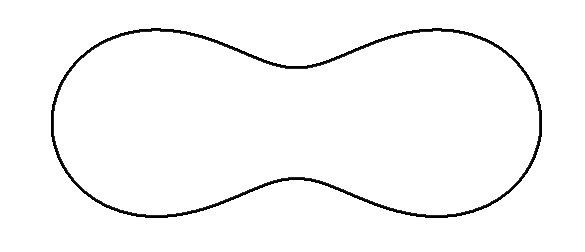}
\caption{A Cassini oval}
\end{center}
\end{figure}

Our task is to prove the following result.

\begin{teo}\label{divcasov}
Let $0<a<1$ be an \emph{algebraic} number and let $n \geq 1$. There are two points $P_n, P'_n \in C_a(\bar{\rat})$ such that the length of the shortest arc joining them equals $\frac{1}{4n}l(C_a)$.  
\end{teo}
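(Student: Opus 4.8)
The plan is to mimic the strategy used for the Erd\H{o}s lemniscates and the Kiepert curve: interpret the arc-length integral on $C_a$ as an Albanese map and reduce algebraicity of division points to the algebraicity of torsion points on an abelian variety defined over $\bar{\rat}$. First I would set up polar or Cartesian coordinates adapted to $C_a$ and compute the differential giving arc length along the curve. Since $C_a=\{z : |z^2-a^2|=1\}$ with $0<a<1$ is connected and symmetric with respect to both coordinate axes, I would exploit this symmetry to reduce to a quarter-arc: the factor $\frac{1}{4n}$ in the statement strongly suggests that one first divides the full oval into four congruent arcs using the symmetries $z\mapsto -z$ and $z\mapsto \bar z$, and then divides one of these quarter-arcs into $n$ equal pieces. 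The points $P_n, P'_n$ should be a symmetric pair straddling (or bounding) one of these subdivision points.

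Next I would change variables to bring the length integral into the hyperelliptic shape $\int \frac{\mathrm{d}x}{y}$ with $y^2 = f(x)$ for an explicit $f \in \bar{\rat}[x]$ depending on $a$ (here the hypothesis that $a$ is \emph{algebraic} is essential, exactly as it enters the statement). The substitution $w=z^2-a^2$ or $u=z^2$ is the natural first move, since $|w|=1$ parametrises $w=e^{i\psi}$ and $z=\sqrt{w+a^2}$; carrying $\mathrm{d}z$ through this gives a square root of a quartic (or the relevant $y^2=f(x)$) in the integrand. I expect the resulting curve to have genus one, so that its Albanese variety is an elliptic curve $E_a/\bar{\rat}$ and the Albanese map $alb_P: C_a' \to E_a$ is an isomorphism onto $E_a$; this is precisely the mechanism that made the lemniscate and Kiepert cases work, and it is the reason one can only hope for the weaker ``four-fold'' statement rather than a full $\rat(\zeta)$-description.

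With the elliptic-curve interpretation in hand, the division point is characterised by the condition that its Albanese image equals $\frac{1}{4n}$ (or $\frac{1}{n}$, after accounting for the quarter-arc reduction) times the relevant period. The key input, analogous to Proposition \ref{Inperiod}, is that four times the quarter-arc integral is an actual period of $E_a$, so that the $j$-th subdivision point maps into the torsion subgroup $E_a[4n]$ (or $E_a[n]$). Since $E_a$ is defined over $\bar{\rat}$, its torsion points are algebraic, and pulling back along the isomorphism $alb_P$ shows the subdivision point is algebraic; intersecting with the explicit algebraic description of $C_a$ then yields $P_n,P'_n \in C_a(\bar{\rat})$. The main obstacle I anticipate is the careful bookkeeping in the change of variables: verifying that the length differential genuinely descends to a holomorphic differential on the genus-one curve, identifying the correct period lattice, and confirming that the four symmetric quarter-arc contributions assemble into an integral homology class so that the relevant multiple of the arc integral lands exactly in the torsion subgroup. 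Unlike the Kiepert case there is no large automorphism group forcing complex multiplication, so one cannot expect a clean class-field-theoretic field of definition — hence the theorem asserts only existence of an algebraic symmetric pair $P_n,P'_n$ rather than an abelian extension statement.
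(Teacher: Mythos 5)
Your high-level framework (reduce to torsion points on an elliptic curve over $\bar{\rat}$ via an abelian-integral/Albanese argument) matches the paper's in spirit, but there is a genuine gap at the step you describe as ``change variables to bring the length integral into the hyperelliptic shape $\int \mathrm{d}x/y$'': for a \emph{single} arc of the Cassini oval this does not work. In polar coordinates, after the substitution $u=2\theta$ and with $b=\frac{1-a^4}{a^4}$, the arc-length integrand is $\frac{\sqrt{\cos t+\sqrt{\cos^2 t+b}}}{\sqrt{\cos^2 t+b}}$, which contains a \emph{nested} square root; the natural curve carrying this differential is a degree-$4$ cover of the line, not a double cover, and the curve that actually shows up in the paper's length analysis has genus $2$ (cf.\ Proposition \ref{jactotspl}). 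So your expectation that ``the resulting curve has genus one'' fails, and with it the claim that the individual subdivision points of a quarter-arc are algebraic. Correspondingly, your reading of $P_n,P'_n$ as consecutive points of an equipartition of a quarter-arc is not what the theorem delivers: the theorem only produces \emph{one} pair of algebraic points whose connecting arc has the prescribed length.

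The missing idea is a de-nesting trick. The paper considers the \emph{sum} $I(u)=I_1(u)+I_2(u)$ of the lengths of the two arcs with angular ranges $(0,u/2)$ and $(\pi/2-u/2,\pi/2)$, i.e.\ a pair of arcs placed symmetrically in the first quadrant. Under the substitution $\cos t=\pm\sqrt{b}\sqrt{v^{-2}-1}$ the two integrands involve $\sqrt{\pm\sqrt{v^{-2}-1}+v^{-1}}$, and since the product of the two quantities under the outer radicals equals $1$, their sum collapses to $\sqrt{2}\,\sqrt{(v+1)/v}$. Only this symmetrized combination becomes an honest elliptic integral, on $E_a:y^2=x(1-x)(x^2-(1-a^4))$. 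Choosing $u$ with $I(u)=\frac{n-1}{n}I(\pi/2)$ places the corresponding point of $E_a$ in $E_a[2n]$, hence $v(u)$, $\cos u$, and the two points of $C_a$ at angles $u/2$ and $\pi/2-u/2$ are algebraic, and it is the arc \emph{between} these two points that has length $\frac{1}{4n}l(C_a)$. This is also why your diagnosis of the theorem's weakness is off: the obstruction is not the absence of complex multiplication but the failure of the single-arc length differential to descend to a genus-one quotient, which is circumvented only for the symmetrized sum of two arcs.
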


\subsection{} The idea is again to show that the length of suitable arcs of Cassini ovals can be expressed via elliptic integrals. Our calculation is essentially the same as the one sketched in \cite{ega84}, where the trick to reduce the integral to an elliptic one is however not made explicit. Since this is crucial for us, we outline the computation in the next subsection. Before this, we should point out that the relation between arc lengths of Cassini ovals and elliptic integrals was first noticed in \cite{ser43}.

\subsection{Computation of the arc length} The equation of $C_a$ in polar coordinates is:

\begin{equation}\label{poleqcas}
r^4-2a^2r^2cos(2\theta)+a^4-1=0 \Leftrightarrow r^2= a^2cos(2\theta)+\sqrt{1-a^4sin^2(2\theta)}.
\end{equation}

For $0<\theta<\pi/2$ let $l(\theta)$ be the length of the arc joining the points on $C_a$ with angles 0 and $\theta$. Then

\begin{equation*}
l(\theta)=\int\limits_0^{\theta}r(\theta)\sqrt{1+\left(\frac{1}{2}\frac{(r(\theta)^2)'}{r(\theta)^2} \right)^2}\mathrm{d}\theta. 
\end{equation*}

Now one computes using \eqref{poleqcas}:

\begin{equation*}
\frac{1}{2}\frac{(r(\theta)^2)'}{r(\theta)^2}=-\frac{a^2 sin(2 \theta)}{\sqrt{1-a^4sin^2(2\theta)}},
\end{equation*}

hence

\begin{equation*}
l(\theta)=\int\limits_0^{\theta}r(\theta)\sqrt{\frac{1}{1-a^4 sin^2(2 \theta)}}\mathrm{d}\theta=\int\limits_0^{\theta}\frac{\sqrt{a^2cos(2 \theta)+\sqrt{a^4 cos^2(2\theta)+(1-a^4)}}}{\sqrt{a^4cos^2(2\theta)+(1-a^4)}}\mathrm{d}\theta.
\end{equation*}

Via the change of variables $u=2\theta$, and letting $b=\frac{1-a^4}{a^4}$, we obtain:
\begin{equation*}
l(\theta)=\frac{1}{2a}\int\limits_0^{u/2}\frac{\sqrt{cos(t)+\sqrt{cos^2(t)+b}}}{\sqrt{cos^2(t)+b}}\mathrm{d}t.
\end{equation*}

\subsection{} Now, for $0 \leq u \leq \pi/2$, we will compute the integral
\begin{equation}\label{sumint}
I(u)=\int\limits_0^{u}\frac{\sqrt{cos(t)+\sqrt{cos^2(t)+b}}}{\sqrt{cos^2(t)+b}}\mathrm{d}t+\int\limits_{\pi-u}^{\pi}\frac{\sqrt{cos(t)+\sqrt{cos^2(t)+b}}}{\sqrt{cos^2(t)+b}}\mathrm{d}t.
\end{equation}
In view of the above change of variables, this will give the sum of the lengths of the arcs whose end points have angles $(0, u/2)$ and $(\pi/2-u/2, \pi/2)$. If we can show that, for $u$ such that $I(u)=\frac{n-1}{n}I(\pi/2)$, $cos(u)$ is an algebraic number, it will follow that the points on $C_a$ with angles $u/2$ and $\pi/2-u/2$ are algebraic, and the arc between them has length $\frac{1}{4n}l(C_a)$.

Let us first treat the first summand $I_1(u)$ in \eqref{sumint}. Since $0\leq u \leq \pi/2$ we can make the change of variables $cos(t)=\sqrt{b}\sqrt{\frac{1}{v^2}-1}$. Then we find
\begin{equation*}
I_1(u)=a^2\int\limits_{\sqrt{\frac{b}{b+1}}}^{v(u)}\frac{\sqrt{\sqrt{b}\left(\sqrt{\frac{1}{v^2}-1}+\frac{1}{v}\right)}}{\sqrt{1-v^2}\sqrt{v^2-(1-a^4)}}\mathrm{d}v,
\end{equation*}
where $v(u)=\left(\frac{cos^2(u)}{b}+1 \right)^{-1/2}$.

On the other hand, for $\pi/2 \leq u \leq \pi$ we have $cos(u)\leq 0$, hence in the second summand $I_2(u)$ of \eqref{sumint} we can make the cange of variables $cos(t)=-\sqrt{b}\sqrt{\frac{1}{v^2}-1}$ and we find
\begin{equation*}
I_2(u)=-a^2\int\limits_{v(u)}^{\sqrt{\frac{b}{b+1}}}\frac{\sqrt{\sqrt{b}\left(-\sqrt{\frac{1}{v^2}-1}+\frac{1}{v}\right)}}{\sqrt{1-v^2}\sqrt{v^2-(1-a^4)}}\mathrm{d}v=a^2\int\limits_{\sqrt{\frac{b}{b+1}}}^{v(u)}\frac{\sqrt{\sqrt{b}\left(-\sqrt{\frac{1}{v^2}-1}+\frac{1}{v}\right)}}{\sqrt{1-v^2}\sqrt{v^2-(1-a^4)}}\mathrm{d}v.
\end{equation*}

Therefore

\begin{align*}
I(u)= & a^2b^{1/4}\int\limits_{\sqrt{\frac{b}{b+1}}}^{v(u)}\frac{\sqrt{\left(\sqrt{\frac{1}{v^2}-1}+\frac{1}{v}\right)}+\sqrt{\left(-\sqrt{\frac{1}{v^2}-1}+\frac{1}{v}\right)}}{\sqrt{1-v^2}\sqrt{v^2-(1-a^4)}}\mathrm{d}v\\
= & a^2(4b)^{1/4}\int\limits_{\sqrt{\frac{b}{b+1}}}^{v(u)}\frac{\sqrt{\frac{v+1}{v}}}{\sqrt{1-v^2}\sqrt{v^2-(1-a^4)}}\mathrm{d}v\\
= & a^2(4b)^{1/4}\int\limits_{\sqrt{1-a^4}}^{v(u)} \frac{1}{\sqrt{v(1-v)(v^2-(1-a^4))}}\mathrm{d}v.
\end{align*}

Hence
\begin{equation*}
I(\pi/2)=a^2(4b)^{1/4}\int\limits_{\sqrt{1-a^4}}^{1} \frac{1}{\sqrt{v(1-v)(v^2-(1-a^4))}}\mathrm{d}v
\end{equation*}
and
\begin{align*}
2\int\limits_{\sqrt{1-a^4}}^{1} \frac{1}{\sqrt{v(1-v)(v^2-(1-a^4))}}\mathrm{d}v= & \int\limits_{\sqrt{1-a^4}}^{1} \frac{1}{\sqrt{v(1-v)(v^2-(1-a^4))}}\mathrm{d}v+\\
&\int\limits_{1}^{\sqrt{1-a^4}} \frac{1}{-\sqrt{v(1-v)(v^2-(1-a^4))}}\mathrm{d}v.
\end{align*}

\subsection{Algebraicity of division points} Letting $E_a$ be the (hyper)elliptic curve with affine equation $y^2=x(1-x)(x^2-(1-a^4))$, we can write
\begin{equation*}
\frac{2I(\pi/2)}{a^2(4b)^{1/4}}=\int_\gamma \frac{\mathrm{d}x}{y}
\end{equation*}
where $\gamma \in H^1(E_a, \integ)$ is the sum of the paths $\gamma_1$ and $-\gamma_2$, with
\begin{align*}
\gamma_i: [\sqrt{1-a^4}, 1] & \rightarrow E_a\\
t & \mapsto \left(t, (-1)^{i-1}\sqrt{t(1-t)(t^2-(1-a^4))}\right).
\end{align*}

Since $E_a$ has genus 1, it follows that, if $u$ satisfies $I(u)=\frac{n-1}{n}I(\pi/2)$ then
\begin{equation*}
\left(v(u), \sqrt{v(u)(1-v(u))(v(u)^2-(1-a^4))}\right) \in E_a[2n].
\end{equation*}
In particular $v(u)$ is an algebraic number. It follows that $cos(u)=\sqrt{b}\sqrt{v(u)^{-2}-1}$ is also algebraic, as we had to show. This concludes the proof of Theorem \ref{divcasov}.

\subsection{Total length of Cassini ovals} For later use, let us explicitly compute the length of $C_a$, for $0<a<1$, in terms of elliptic integrals (a similar attempt has been made in \cite[(11)]{ega84}, but the result seems incorrect and in any case it is not optimal). We have
\begin{equation}\label{lencas}
l(C_a)=4\frac{1}{2a}I(\pi/2)=2\sqrt{2}(1-a^4)^{1/4}\int\limits_{\sqrt{1-a^4}}^{1} \frac{1}{\sqrt{v(1-v)(v^2-(1-a^4))}}\mathrm{d}v.
\end{equation}
Recall that the elliptic integral (of the first kind) with modulus $m$ is defined as (beware our slightly unusual convention here):
\begin{equation}\label{ellint}
K(m)=\int\limits_{0}^{1} \frac{\mathrm{d}t}{\sqrt{(1-t^2)(1-mt^2)}}=\int\limits_{0}^{1} \frac{\mathrm{d}v}{2\sqrt{v(1-v)(1-mv)}}.
\end{equation}

On the other hand, via the change of variable $1/v=u$ the integral in \eqref{lencas} becomes
\begin{equation*}
\int\limits_{1}^{\frac{1}{\sqrt{1-a^4}}}\frac{\mathrm{d}u}{\sqrt{(u-1)(1-u\sqrt{1-a^4})(1+u\sqrt{1-a^4})}}.
\end{equation*}
Via the further change of variables $w=\frac{u-1}{\frac{1}{\sqrt{1-a^4}}-1}$ the above integral becomes
\begin{equation*}
\frac{1}{(1-a^4)^{1/4}(\sqrt{1+\sqrt{1-a^4}})}\int\limits_{0}^{1}\frac{\mathrm{d}w}{\sqrt{w(1-w)(1+\frac{1-\sqrt{1-a^4}}{1+\sqrt{1-a^4}}w)}},
\end{equation*}
hence
\begin{equation*}
l(C_a)=\frac{4\sqrt{2}}{\sqrt{1+\sqrt{1-a^4}}}K\left(\frac{1-\sqrt{1-a^4}}{-1-\sqrt{1-a^4}}\right).
\end{equation*}
Setting $s=\frac{1-\sqrt{1-a^4}}{2}$ we get
\begin{equation*}
l(C_a)=\frac{4}{\sqrt{1-s}}K\left(\frac{s}{s-1}\right).
\end{equation*}
Because of the identity
\begin{equation}\label{ellpfaff}
K(s)=\frac{1}{\sqrt{1-s}}K\left(\frac{s}{s-1}\right)
\end{equation}
we finally find
\begin{equation}\label{lencasell}
l(C_a)=4K\left(\frac{1-\sqrt{1-a^4}}{2}\right).
\end{equation}

It is a classical result of Schneider \cite[Satz 15(III. Fassung)]{sch57} that $K(m)$ is transcendental whenever $m$ is algebraic. Together with the above formula this implies the following
\begin{prop}\label{tracas}
If $a \in \bar{\rat}$ then $l(C_a)$ is transcendental.
\end{prop}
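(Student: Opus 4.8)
The plan is to combine the explicit formula for the length of a Cassini oval, equation \eqref{lencasell}, with Schneider's transcendence theorem on the elliptic integral $K$, which is cited immediately before the statement. Since the formula reads $l(C_a)=4K\left(\frac{1-\sqrt{1-a^4}}{2}\right)$, the entire burden of the proof reduces to verifying that the modulus $m=\frac{1-\sqrt{1-a^4}}{2}$ is an algebraic number whenever $a$ is algebraic, and then invoking Schneider's result verbatim.

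First I would record that the set $\bar{\rat}$ of algebraic numbers is a field closed under the algebraic operations appearing in $m$: if $a \in \bar{\rat}$ then $a^4 \in \bar{\rat}$, hence $1-a^4 \in \bar{\rat}$, hence its square root $\sqrt{1-a^4}$ is algebraic (being a root of $X^2-(1-a^4)$), and therefore $m=\frac{1-\sqrt{1-a^4}}{2} \in \bar{\rat}$. This is a one-line observation but it is the only nontrivial hypothesis-checking needed to apply the cited theorem.

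Next I would appeal directly to Schneider's theorem \cite[Satz 15(III. Fassung)]{sch57}, quoted in the excerpt, which asserts that $K(m)$ is transcendental for every algebraic $m$. Applying it to our algebraic $m$ shows that $K\left(\frac{1-\sqrt{1-a^4}}{2}\right)$ is transcendental. Since $l(C_a)$ is obtained from this value by multiplying by the rational constant $4$, and multiplying a transcendental number by a nonzero rational keeps it transcendental, we conclude that $l(C_a)$ is transcendental.

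There is essentially no genuine obstacle here, as all the hard analytic work has already been carried out: the reduction of the arc-length integral to the elliptic integral $K$ (the computations leading to \eqref{lencasell}) and Schneider's deep transcendence result both precede the statement. The only point warranting a moment's care is to ensure that the argument $m$ of $K$ genuinely lands in the range covered by Schneider's theorem and is a bona fide algebraic number rather than, say, a formal expression; but since the operations involved are algebraic and $0<a<1$ keeps $m$ a well-defined real number strictly between $0$ and $\tfrac12$, this is immediate. The proof is thus little more than a citation combined with the closure of $\bar{\rat}$ under radicals.
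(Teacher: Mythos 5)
Your argument is exactly the paper's for the case $0<a<1$: combine formula \eqref{lencasell} with Schneider's theorem, after the one-line check that $m=\frac{1-\sqrt{1-a^4}}{2}$ is algebraic. For that range of $a$ the proof is complete and correct.

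However, the proposition is stated for every algebraic $a$, and the paper's own proof makes clear that the general case is intended (it reads: ``This implies the result for every $a$ since one has, for $0<a<1$, $l(C_{1/a})=a\cdot l(C_a)$''). Your argument does not cover $a\geq 1$. The issue is that \eqref{lencasell} was derived under the standing assumption $0<a<1$ of that section: the connectedness of the oval, the polar parametrisation, and the reality of $\sqrt{1-a^4}$ (which appears as a limit of integration in \eqref{lencas}) all depend on it. For $a>1$ the quantity $\sqrt{1-a^4}$ is imaginary, $m$ is no longer a real number in $\left(0,\tfrac{1}{2}\right)$, and the curve $C_a$ breaks into two components, so the formula as derived simply does not apply. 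You yourself observe that $0<a<1$ is what keeps $m$ in range, but you do not address what happens outside that range. The paper closes this gap with the scaling identity $l(C_{1/a})=a\cdot l(C_a)$ for $0<a<1$ (a special case of \eqref{lenggr1} with $k=2$), which reduces $a>1$ to the case already treated; the boundary case $a=1$ is Bernoulli's lemniscate, whose length is transcendental by Proposition \ref{transcerdos}. With that one additional reduction your proof coincides with the paper's.
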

\begin{proof}
If $0<a<1$ this follows from equation \ref{lencasell} and Schneider's theorem. This implies the result for every $a$ since one has, for $0<a<1$, $l(C_{1/a})=a\cdot l(C_a)$ (the reader can check this adapting the previous calculation or find a proof of a more general statement in the next section).
\end{proof}

\section{Remarks on the length of regular lemniscates}

\begin{defin}
Let $a>0$ be a real number, and $k\geq 1$ an integer. We call the curve
\begin{equation*}
C_{a, k}=\{z \in \comp: |z^k-a^k|=1\}
\end{equation*}
a \emph{regular polynomial lemniscate}.
\end{defin}

\subsection{} The name comes from the fact that $C_{a, k}$ is the level set of the polynomial $T^k-a^k$, whose roots are the vertices of a regular polygon in the plane. For $a \in \bar{\rat}$ the curve $C_{a, k}$ is in $\mathcal{S}$. For example for $a=1$ we obtain Erd\"{o}s lemniscates. If $k=2$ we recover Cassini ovals. These curves were first studied in \cite{ser43}, \cite{ser43a}.

The equation of $C_{a, k}$ in polar coordinates is

\begin{equation*}
r^{2k}-2a^kr^kcos(k\theta)+a^{2k}-1=0.
\end{equation*}

The aim of this section is to compute the length of $C_{a, k}$, expressing it in terms of hypergeometric functions. As a consequence of our computation we will be able to:
\begin{enumerate}
\item verify that, for given $k$, the longest curve among the $C_{a, k}$ is the Erd\H{o}s lemniscate, in accordance with Erd\H{o}s conjecture stating that $C_{1, k}$ is the longest curve of the form $P^{-1}(S^1)$, for $P \in \comp[T]$ monic of degree $k$.
\item Compare our result with the one in the previous section for $k=2$ and rediscover a very classical formula expressing the hypergeometric function $_2F_1\left(\frac{1}{4}, \frac{3}{4}, 1, x \right)$ in terms of elliptic integrals, which explains a posteriori why Ramanujan's theory of elliptic functions to alternative bases works in signature 4 (see \cite[Chapter 33]{ber97}).
\item Check that the length of $C_{a, k}$ is transcendental when $a \in \bar{\rat}$, and construct, using Proposition \ref{transperab} and $(2)$, a family of genus 2 curves over $\bar{\rat}$ whose Jacobian is isogenous to a product of elliptic curves. One says that such a curve has \emph{totally split Jacobian}.
\end{enumerate}

\subsection{} The behaviour of the length of the curves $C_{a, k}$ (or of closely related curves) for $a$ varying has been studied qualitatively in \cite{ega84} and \cite{wa06}. This length was also calculated in \cite{bu91}, and a more general formula has been established in \cite{kuta00}. It seems however that it went unnoticed that the expression of $l(C_{a, k})$ in terms of hypergeometric functions follows easily from the formula given by Serret in \cite{ser43a} (of which \eqref{serform} is a special case). We deduce this formula from a straightforward generalization of an old computation by Matz \cite{ma95} on the length of Cassini ovals. We outline this computation here, generalised to our situation.

\subsection{} From the polar equation of $C_{a, k}$ one deduces that
\begin{align*}
cos(k\theta)=&\frac{a^{2k}-1}{2a^k}\frac{1}{r^k}+\frac{r^k}{2a^k}\Rightarrow r\frac{\mathrm{d}\theta}{\mathrm{d}r}=\frac{1}{2sin(k\theta)}\left( \frac{a^{2k}-1}{a^kr^k}-\frac{r^k}{a^k} \right)\\
\Rightarrow 1+\left(r\frac{\mathrm{d}\theta}{\mathrm{d}r}\right)^2=&\frac{(2r^k)^2}{(r^{2k}-(a^k-1)^2)((1+a^k)^2-r^{2k})}.
\end{align*}

\subsection{} Let us first suppose $0<a<1$. In this case $C_{a, k}$ is obtained from the arc between the points with angles $-\frac{\pi}{k}$ and $\frac{\pi}{k}$ by rotations of $\frac{2j\pi}{k}$, $j=0, \ldots, k-1$. This arc in turn is symmetric with respect to the axis $y=0$.

\begin{figure}[h!]
\begin{center}
\includegraphics[width=0.2\textwidth]{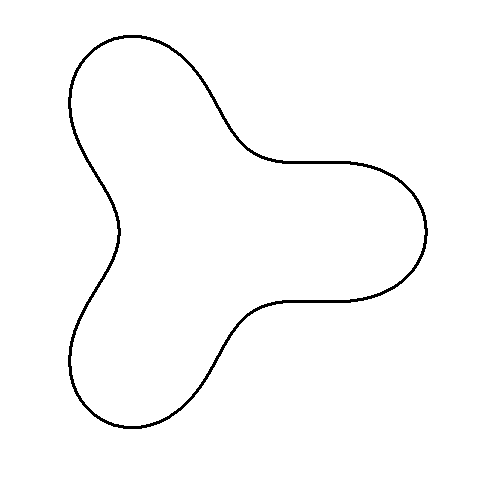}
\caption{$C_{a, 3}, 0<a<1$}
\end{center}
\end{figure}

It follows that

\begin{equation}\label{serform}
l(C_{a,k})=2k\int\limits_{(1-a^k)^{1/k}}^{(1+a^k)^{1/k}}\frac{2r^k\mathrm{d}r}{\sqrt{(r^{2k}-(a^k-1)^2)((1+a^k)^2-r^{2k})}}.
\end{equation}

Set $r^{2k}=(1-a^k)^2sin^2(\phi)+(1+a^k)^2cos^2(\phi)$. A straightforward computation yields:
\begin{equation}\label{matzfor}
l(C_{a,k})=\frac{4}{(1+a^k)^{\frac{k-1}{k}}}\int\limits_{0}^{\pi/2}\frac{\mathrm{d}\phi}{\left( 1-\frac{4a^k}{(1+a^k)^2}sin^2(\phi) \right)^{\frac{k-1}{2k}}};
\end{equation}
for $k=2$ this recovers indeed (a special case of) the final formula in \cite{ma95}.

\subsection{} In the case $a>1$ the curve $C_{a,k}$ has $k$ isometric connected components. The one crossing the positive $x$-axis is symmetric with respect to it.

\begin{figure}[h!]
\begin{center}
\includegraphics[width=0.2\textwidth]{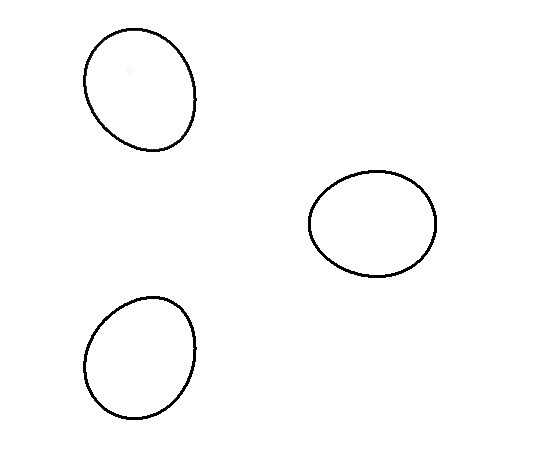}
\caption{$C_{a, 3}, a>1$}
\end{center}
\end{figure}

Hence we find

\begin{equation*}
l(C_{a,k})=2k\int\limits_{(a^k-1)^{1/k}}^{(1+a^k)^{1/k}}\frac{2r^k\mathrm{d}r}{\sqrt{(r^{2k}-(a^k-1)^2)((1+a^k)^2-r^{2k})}}.
\end{equation*}

The same computation as before works; setting $b=a^{-1}$ one finds

\begin{equation}\label{lenggr1}
l(C_{a,k})=b^{k-1}\frac{4}{(1+b^k)^{\frac{k-1}{k}}}\int\limits_{0}^{\pi/2}\frac{\mathrm{d}\phi}{\left( 1-\frac{4b^k}{(1+b^k)^2}sin^2(\phi) \right)^{\frac{k-1}{2k}}}=\frac{1}{a^{k-1}}l\left(C_{\frac{1}{a}, k}\right).
\end{equation}

\subsection{Gauss' hypergeometric functions} It is now easy to express the integral in \eqref{matzfor} in terms of hypergeometric functions. Let us recall the key properties of these functions which we will need.

For $p, q \in \real$ and $r \in \real \setminus \integ_{\leq 0}$ the hypergeometric function $_2F_1(p, q, r; z)$ is defined on the unit disc $|z|<1$ by the power series
\begin{equation*}
_2F_1(p, q, r; z)=\sum_{n=0}^\infty \frac{(p)_n(q)_n}{(r)_n}\frac{z^n}{n!}
\end{equation*}
where, for $x \in \real$:
\begin{equation*}
(x)_n=\begin{cases}
1 \text{ if } n=0\\
x(x+1)\cdots(x+n-1) \text{ if } n>0.
\end{cases}
\end{equation*}

It enjoys the following properties (among countless others):
\begin{description}
\item[Symmetry] $_2F_1(p, q, r; z)={_2F_1}(q, p, r; z)$.
\item[Euler integral representation] $B(q, r-q){_2F_1}(p, q, r; z)=\int\limits_{0}^{1}t^{q-1}(1-t)^{r-q-1}(1-zt)^{-p}\mathrm{d}t$. In particular we have $_2F_1(\frac{1}{2}, \frac{1}{2}, 1; m)=\frac{2}{\pi}K(m)$ where $K(m)$ is the elliptic integral as defined in \eqref{ellint}.
\item[Pfaff transformation] $_2F_1(p, q, r; z)=(1-z)^{-p} {_2F_1}(p, r-q, r; \frac{z}{z-1})$. In particular, for $p=q=\frac{1}{2}$ and $r=1$ we find the transformation law \eqref{ellpfaff}.
\item[Quadratic transformation] $_2F_1(p, q, 2q; \frac{4z}{(1+z)^2})=(1+z)^{2p}{_2F_1}(p, p-q+\frac{1}{2}, q+\frac{1}{2}; z^2)$ (see \cite[(3.1.11), pag. 128]{anasro00}).
\end{description}

\subsection{Length of $C_{a, k}$}\label{lghcak} Let us come back to the length of $C_{a, k}$ for $0<a<1$ as computed in \eqref{matzfor}. The change of variable $sin^2(\phi)=t$ yields, using the integral representation and the quadratic transformation:
\begin{align*}
l(C_{a,k})=&\frac{2}{(1+a^k)^{\frac{k-1}{k}}}\int\limits_{0}^{1}\frac{\mathrm{d}t}{\left( 1-\frac{4a^k}{(1+a^k)^2}t \right)^{\frac{k-1}{2k}}\sqrt{t}\sqrt{1-t}}\\
=&\frac{2\pi}{(1+a^k)^{\frac{k-1}{k}}}{_2F_1}\left(\frac{k-1}{2k}, \frac{1}{2}, 1; \frac{4a^k}{(1+a^k)^2}\right)=2\pi {_2F_1}\left(\frac{k-1}{2k}, \frac{k-1}{2k}, 1; a^{2k} \right).
\end{align*}

\begin{rem}
Recall that for $a=1$ the curve $C_{a, k}$ is the Erd\H{o}s lemniscate with $k$ leaves, whose length was computed in \eqref{totlengtherdos}. This suggests that the following equality should hold true:

\begin{equation*}
2\pi {_2F_1}\left(\frac{k-1}{2k}, \frac{k-1}{2k}, 1; 1 \right)=2^{\frac{1}{k}}B\left(\frac{1}{2}, \frac{1}{2k}\right).
\end{equation*}

This is indeed the case: the above formula follows from Legendre duplication formula for the Gamma function and from the following identity attributed to Gauss, which is a consequence of Euler integral representation:
\begin{equation*}
{_2F_1}(p, q, r; 1)=\frac{\Gamma(r)\Gamma(r-p-q)}{\Gamma(r-p)\Gamma(r-q)}.
\end{equation*}
\end{rem}

\subsection{Case k=2} In this case, comparing the formula we just obtained with \eqref{lencasell}, we find the equality:
\begin{equation*}
{_2F_1}\left( \frac{1}{4}, \frac{1}{4}, 1; a^4\right)=\frac{2}{\pi}K\left(\frac{1-\sqrt{1-a^4}}{2}\right).
\end{equation*}
Using the Pfaff transformation and setting $a^4=z$ gives:
\begin{equation}\label{hypgeoell}
{_2F_1}\left( \frac{1}{4}, \frac{3}{4}, 1; \frac{z}{z-1}\right)=\frac{2(1-z)^{\frac{1}{4}}}{\pi}K\left(\frac{1-\sqrt{1-z}}{2}\right)
\end{equation}
This the promised (very classical) relation between elliptic integrals and the hypergeometric function ${_2F_1}\left( \frac{1}{4}, \frac{3}{4}, 1; -\right)$.

\subsection{Behaviour of $l(C_{a,k})$} One deduces immediately from the formula obtained in \ref{lghcak} and from equation \eqref{lenggr1} the following information on the behaviour of the length of $C_{a,k}$:

\begin{prop}\label{erdconjspec}
Fix an integer $k \geq 2$. The function
\begin{align*}
\real_{\geq 0} \rightarrow & \real_{\geq 0}\\
a \mapsto & l(C_{a, k})
\end{align*}
attains its maximum at $a=1$. Moreover its restriction to the interval $[0, 1[$ is strictly increasing and convex.
\end{prop}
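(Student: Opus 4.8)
The plan is to read everything off the closed-form expression obtained in \ref{lghcak}, namely
\begin{equation*}
l(C_{a,k}) = 2\pi\,{_2F_1}\!\left(\tfrac{k-1}{2k}, \tfrac{k-1}{2k}, 1; a^{2k}\right), \qquad 0 \le a < 1,
\end{equation*}
together with the functional equation \eqref{lenggr1} relating the regimes $a<1$ and $a>1$. Write $p = \frac{k-1}{2k}$, so that $0 < p < \frac{1}{2}$ for $k \ge 2$, and put $z = a^{2k}$. The crux of the whole argument is the trivial but decisive observation that the Taylor coefficients $c_n = \frac{(p)_n^2}{(n!)^2}$ of ${_2F_1}(p,p,1;z) = \sum_{n\ge 0} c_n z^n$ are \emph{strictly positive}, since $p > 0$.

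First I would treat the interval $[0,1[$. Substituting $z = a^{2k}$ gives $l(C_{a,k}) = 2\pi \sum_{n \ge 0} c_n a^{2kn}$, a power series in $a$ with non-negative coefficients and radius of convergence $1$. Since we stay strictly inside this radius for $a \in [0,1[$, term-by-term differentiation is legitimate, and
\begin{equation*}
\frac{d}{da} l(C_{a,k}) = 2\pi \sum_{n \ge 1} c_n (2kn)\, a^{2kn-1}, \qquad \frac{d^2}{da^2} l(C_{a,k}) = 2\pi \sum_{n \ge 1} c_n (2kn)(2kn-1)\, a^{2kn-2}.
\end{equation*}
Every summand in the first series is positive for $a \in (0,1)$, giving strict monotonicity on $[0,1[$; every summand in the second is non-negative there (indeed the $n=1$ term is positive for $a>0$), giving convexity. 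This disposes of the second sentence of the proposition.

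It remains to show that the maximum over all of $\real_{\ge 0}$ sits at $a=1$. On $[0,1[$ the function is strictly increasing, so I only need to compare with the value $l(C_{1,k})$ and to handle $a>1$. For the comparison, note that the series converges at $z=1$ precisely because $1 - 2p = \frac{1}{k} > 0$; Abel's limit theorem then gives $\lim_{a\to 1^-} l(C_{a,k}) = 2\pi\,{_2F_1}(p,p,1;1)$, and by Gauss's evaluation ${_2F_1}(p,p,1;1) = \frac{\Gamma(1)\Gamma(1-2p)}{\Gamma(1-p)^2}$ (recalled after \ref{lghcak}) this is finite and, matching \eqref{totlengtherdos}, equals the length $l(C_{1,k})$ of the Erd\H{o}s lemniscate. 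Hence $l(C_{a,k}) < l(C_{1,k})$ for $0 \le a < 1$. For $a > 1$ I would invoke \eqref{lenggr1}: since $1/a \in (0,1)$ one has $l(C_{1/a,k}) < l(C_{1,k})$ by the case just settled, while $a^{-(k-1)} < 1$ because $k \ge 2$; multiplying gives $l(C_{a,k}) = a^{-(k-1)} l(C_{1/a,k}) < l(C_{1,k})$. Thus $a = 1$ is the unique maximizer.

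The only genuinely delicate point is the boundary analysis at $a=1$: one must justify both the convergence of the hypergeometric series at $z=1$ (the condition $r - p - q = \frac{1}{k} > 0$) and that this boundary value is literally the arc length of the Erd\H{o}s lemniscate, so that the continuous extension of the formula from $[0,1[$ agrees with the independently computed \eqref{totlengtherdos}. Everything else reduces to the sign of the coefficients $c_n$ and a one-line manipulation of the functional equation \eqref{lenggr1}.
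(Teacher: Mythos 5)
Your proposal is correct and follows exactly the route the paper intends: the paper offers no written proof beyond the remark that the statement follows ``immediately'' from the hypergeometric formula of \ref{lghcak} and from equation \eqref{lenggr1}, and your argument is precisely that deduction spelled out (positivity of the coefficients $\frac{(p)_n^2}{(n!)^2}$ for monotonicity and convexity on $[0,1[$, Gauss's evaluation at $z=1$ to match \eqref{totlengtherdos}, and the functional equation \eqref{lenggr1} with the factor $a^{-(k-1)}<1$ to handle $a>1$). Nothing further is needed.
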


\subsection{Transcendence of lengths} Let us now investigate the transcendence of $l(C_{a,k})$ for $a \in \bar{\rat}$. Up to changing coordinates we may assume that $a \in \real$, and because of equation \eqref{lenggr1} we can restrict ourselves to $0<a<1$. Letting $b=\frac{4a^k}{(1+a^k)^2}$ in \ref{lghcak}, we are interested in the transcendence of the integral

\begin{equation*}
\int\limits_{0}^{1}\frac{\mathrm{d}t}{\left( 1-bt \right)^{\frac{k-1}{2k}}\sqrt{t}\sqrt{1-t}}.
\end{equation*}

This kind of integral has been studied by Wolfart in his work on the transcendence of values of the hypergeometric function \cite{wol88}. Let $X_{b, k}$ be the smooth projective curve birational to the affine curve $y^{2k}=(1-bx)^{k-1}x^k(1-x)^k$. Then, as explained in \cite[Section 2.2]{wol88} and \cite[Section 8]{archi03}, the differential $\frac{\mathrm{d}x}{y}$ gives a holomorphic differential $\eta$ on $X_{b, k}$, and the above integral is an algebraic multiple of a non zero period of this differential. Another consequence of the already mentioned analytic subgroup theorem is a very general transcendence result for non zero periods of differential forms over $\bar{\rat}$ \cite[Theorem 6.8, Corollary 6.9]{bawu08}, widely generalizing Schneider's work on elliptic integrals and beta integrals:

\begin{teo}(W\"{u}stholz)\label{alltransc}
Let $C$ be a smooth projective curve over $\bar{\rat}$ and $\omega$ a rational differential form on $C$. Then every non zero period of $\omega$ is transcendental.
\end{teo}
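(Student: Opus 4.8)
The plan is to deduce this from W\"ustholz's analytic subgroup theorem, by realising the period $\int_\gamma\omega$ as the value of a $\bar{\rat}$-rational invariant differential on a commutative algebraic group, evaluated on a nonzero element of its period lattice. First I would reduce to an invariant differential. Let $D$ be the polar divisor of $\omega$ and let $G=Alb(C,D')$ be the generalised Albanese variety of $C$ with modulus $D'$ supported on $|D|$, with multiplicities taken large enough that $\omega$ has poles bounded by $D'$; as recalled above (and in general in \cite{ser12}) this is a connected commutative algebraic group over $\bar{\rat}$, an extension of the Jacobian $Alb(C)$ by a linear group, built from copies of $\mathbf{G}_m$ (accounting for the residues of $\omega$) and of $\mathbf{G}_a$ (accounting for its higher order poles). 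The generalised Albanese map $\psi:C\setminus|D|\to G$ is defined over $\bar{\rat}$, and the invariant differentials on $G$ pull back under $\psi$, modulo exact differentials $df$ with $f\in\bar{\rat}(C)$, to all rational differentials with poles bounded by $D'$. Hence I may write $\omega=\psi^*\Omega+df$ with $\Omega$ an invariant differential on $G$ defined over $\bar{\rat}$. For a loop $\gamma$ in $C\setminus|D|$ the exact term integrates to zero, so that, identifying $\Omega$ with an element of $\mathrm{Lie}(G)^*$ defined over $\bar{\rat}$,
\[
\int_\gamma\omega=\int_{\psi_*\gamma}\Omega=\Omega(\omega_\gamma),
\]
where $\omega_\gamma\in\Lambda:=\ker(\exp_G)\subset\mathrm{Lie}(G)(\comp)$ is the element of the period lattice corresponding to $\psi_*\gamma\in H_1(G,\integ)$. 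The hypothesis that the period is nonzero forces $\omega_\gamma\neq 0$.

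Next I would run the standard ``graph trick''. Suppose for contradiction that $\alpha:=\Omega(\omega_\gamma)$ is a nonzero algebraic number. Consider $\tilde{G}=G\times\mathbf{G}_a$ over $\bar{\rat}$ and the $\bar{\rat}$-rational Lie subalgebra $\mathfrak{b}=\{(v,\Omega(v)):v\in\mathrm{Lie}(G)\}\subset\mathrm{Lie}(\tilde{G})$, the graph of $\Omega$. The vector $u=(\omega_\gamma,\alpha)\in\mathfrak{b}(\comp)$ satisfies $\exp_{\tilde{G}}(u)=(e,\alpha)$, a nonzero point of $\tilde{G}(\bar{\rat})$ lying in the analytic subgroup $\exp_{\tilde{G}}(\mathfrak{b}(\comp))$. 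By the analytic subgroup theorem \cite{bawu08}, this analytic subgroup contains a nontrivial connected algebraic subgroup $H\subset\tilde{G}$ defined over $\bar{\rat}$, with $\mathrm{Lie}(H)\subseteq\mathfrak{b}$ and $(e,\alpha)\in H(\bar{\rat})$. Since $\mathfrak{b}$ is the graph of a linear form, $\mathrm{Lie}(H)$ meets the kernel of the projection $p:\tilde{G}\to G$ trivially; in characteristic zero this forces $p|_H$ to be an isomorphism onto a nontrivial $\bar{\rat}$-subgroup $H'=p(H)\subseteq G$, so that $H$ is the graph of an algebraic homomorphism $\ell:H'\to\mathbf{G}_a$ over $\bar{\rat}$. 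But then $(e,\alpha)\in H$ means $\alpha=\ell(e)=0$, as a homomorphism sends the identity to $0$; this contradicts $\alpha\neq 0$, and therefore $\int_\gamma\omega$ is transcendental.

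The genuine difficulty is not in this deduction but is entirely concentrated in the analytic subgroup theorem itself, whose proof rests on the transcendence machinery of \cite{bawu08} and which I would import as a black box. On our side, the step requiring care is the reduction in the first paragraph: one must exhibit a \emph{single} commutative group $G$ over $\bar{\rat}$ whose invariant differentials capture all rational differentials modulo exact ones, including those with higher order poles (the ``second kind'', responsible for the unipotent part of $G$) and with nonzero residues (the ``third kind'', responsible for the toric part and for the appearance of $2\pi i$ and of logarithms of algebraic numbers among the periods). This is exactly the content of the theory of the generalised Jacobian with arbitrary modulus \cite{ser12}, which extends the multiplicity-one case treated above; once it is in place, the argument applies uniformly, with no need to treat the three kinds of differentials separately.
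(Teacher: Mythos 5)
The paper does not prove this theorem at all: it is imported verbatim from Baker--W\"ustholz \cite[Theorem 6.8, Corollary 6.9]{bawu08}, so there is no internal proof to compare against. Your sketch is the standard deduction of that result from the analytic subgroup theorem, and it is essentially sound. Two remarks. First, the reduction step can be stated more cleanly than you do: by Serre's theory \cite{ser12}, if you take the modulus $D'$ to be exactly the polar divisor of $\omega$, pullback along the generalised Albanese map is an \emph{isomorphism} from the invariant differentials of $G=Alb(C,D')$ onto $H^0(C,\Omega(D'))$ (both spaces have dimension $g-1+\deg D'$), so $\omega=\psi^*\Omega$ on the nose and the ``modulo exact differentials'' hedge is unnecessary --- though harmless, since $\int_\gamma df=0$ over a loop. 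One should also note that ``rational differential form on $C$'' must mean defined over $\bar{\rat}$; otherwise the statement is false (rescale a holomorphic differential by the inverse of one of its periods). Second, and this is the only point where your argument could genuinely fail if one is careless: the version of the analytic subgroup theorem you need is the refined one asserting that the nontrivial algebraic point $\exp_{\tilde G}(u)=(e,\alpha)$ lies in the algebraic subgroup $H$ (equivalently, that the \emph{smallest} $\bar{\rat}$-subspace of $\mathrm{Lie}(\tilde G)$ containing $u$ is the Lie algebra of an algebraic subgroup). The weaker formulation ``the analytic subgroup $\exp(\mathfrak{b}(\comp))$ contains a nontrivial algebraic subgroup over $\bar{\rat}$'' does not suffice: such an $H$ could perfectly well exist (it only says that $\Omega$ restricted to $\mathrm{Lie}(p(H))$ is the differential of an algebraic homomorphism to $\mathbf{G}_a$), and no contradiction follows without knowing $(e,\alpha)\in H$. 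You do invoke the refined form, which is indeed what \cite{bawu08} provides, so the argument closes; just be aware that this is where the weight rests.
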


Let us mention that the recent work \cite{huwu18} further generalises the previous theorem. Applying it in our case we deduce the following generalization of Propositions \ref{transcerdos} and \ref{tracas}.

\begin{prop}\label{transgen}
If $a \in \bar{\rat}$ and $k \geq 1$ then the length of $C_{a, k}$ is transcendental.
\end{prop}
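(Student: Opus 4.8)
The plan is to reduce the statement to the transcendence of a single period of a holomorphic differential on a curve defined over $\bar{\rat}$, and then to invoke Theorem \ref{alltransc}. First I would dispose of the degenerate and large-$a$ cases. Since rotating the plane by the argument of $a$ is an isometry carrying $C_{a, k}$ to $C_{|a|, k}$, and $|a| \in \bar{\rat}\cap \real_{>0}$ whenever $a \in \bar{\rat}$ (as $|a|^2 = a\bar{a} \in \bar{\rat}$), there is no loss of generality in assuming $a$ real and positive. The case $a = 1$ is the Erd\H{o}s lemniscate and is already covered by Proposition \ref{transcerdos}; for $a > 1$ the functional equation \eqref{lenggr1} writes $l(C_{a, k})$ as the product of the nonzero algebraic number $a^{-(k-1)}$ with $l(C_{1/a, k})$, where $1/a \in \bar{\rat}\cap(0, 1)$. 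Hence it suffices to treat $0 < a < 1$ with $a \in \bar{\rat}$.

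In that range I would use the closed form obtained in \ref{lghcak}. Setting $b = \frac{4a^k}{(1+a^k)^2}$, the computation leading to \eqref{matzfor} gives
\begin{equation*}
l(C_{a, k}) = \frac{2}{(1+a^k)^{\frac{k-1}{k}}} \int\limits_0^1 \frac{\mathrm{d}t}{(1-bt)^{\frac{k-1}{2k}}\sqrt{t}\sqrt{1-t}}.
\end{equation*}
The crucial observation is that the prefactor $\frac{2}{(1+a^k)^{(k-1)/k}}$ and the parameter $b$ both lie in $\bar{\rat}$, being algebraic combinations and roots of the algebraic number $a$, and that $0 < b < 1$ (equivalently $(1-a^k)^2 > 0$), so the integrand is a genuine algebraic function of $t$ over $\bar{\rat}$ with integrable singularities only at the endpoints. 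Thus $l(C_{a, k})$ is a nonzero algebraic multiple of the integral $I = \int_0^1 (1-bt)^{-(k-1)/2k} t^{-1/2}(1-t)^{-1/2}\,\mathrm{d}t$, and it remains to prove that $I$ is transcendental.

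For this I would follow Wolfart's construction. Let $X_{b, k}$ be the smooth projective curve birational to $y^{2k} = (1-bx)^{k-1}x^k(1-x)^k$, which is defined over $\bar{\rat}$ since $b \in \bar{\rat}$. Over the affine $x$-line the meromorphic differential $\frac{\mathrm{d}x}{y}$ restricts to $(1-bx)^{-(k-1)/2k}x^{-1/2}(1-x)^{-1/2}\,\mathrm{d}x$, so that $I = \int_0^1 \frac{\mathrm{d}x}{y}$ taken along the real segment joining the branch points $x = 0$ and $x = 1$. As recalled from \cite[Section 2.2]{wol88} and \cite[Section 8]{archi03}, $\frac{\mathrm{d}x}{y}$ extends to a holomorphic differential $\eta$ on $X_{b, k}$, and --- exactly as in Proposition \ref{Inperiod}, where an integral between branch points is turned into an honest cycle by summing over sheets against suitable roots of unity --- the number $I$ equals an algebraic multiple of a period of $\eta$. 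Since $I > 0$ this period is nonzero, and Theorem \ref{alltransc} forces it to be transcendental. Therefore $I$, and hence $l(C_{a, k})$, is transcendental.

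The only genuinely delicate point is this last identification: verifying that the incomplete integral $I$, taken between two branch points rather than around a closed loop, really is an algebraic multiple of a \emph{nonzero period} of a holomorphic differential on the compact curve $X_{b, k}$, rather than something which could vanish or pick up a transcendental comparison factor. I would not reprove this, relying on the cited analysis in \cite{wol88}, \cite{archi03}; its combinatorial core is the same sheet-summation argument already used for $H_n$ in Proposition \ref{Inperiod}. Everything else is bookkeeping about membership in $\bar{\rat}$ together with the reductions above.
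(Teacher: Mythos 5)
Your argument is correct and follows essentially the same route as the paper: reduce to $0<a<1$ real via the isometry and equation \eqref{lenggr1}, write $l(C_{a,k})$ as a nonzero algebraic multiple of $\int_0^1(1-bt)^{-(k-1)/2k}t^{-1/2}(1-t)^{-1/2}\,\mathrm{d}t$, identify this with an algebraic multiple of a nonzero period of the holomorphic differential $\eta$ on $X_{b,k}$ via \cite{wol88} and \cite{archi03}, and conclude by Theorem \ref{alltransc}. Your explicit treatment of the boundary case $a=1$ (deferring to Proposition \ref{transcerdos}) is a small tidying-up of a point the paper leaves implicit, but it is not a different method.
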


\begin{rem}
We do not know which Serret curves have transcendental length. For example, is it true that all polynomial lemniscates in $\mathcal{S}$ have transcendental length? Theorem \ref{alltransc} would seem to suggest that most curves in $\mathcal{S}$ should enjoy this property; however our calculations with sinusoidal spirals in \ref{lensin} provided an infinite number of counterexamples. The issue is that a general relation between the length of Serret curves and periods of suitable curves does not appear to be transparent.
\end{rem}

Finally, our computation in the case $k=2$ joint with Proposition \ref{transperab} allows to construct a family of genus two curves with totally split Jacobian.

\begin{prop}\label{jactotspl}
Let $a \in \bar{\rat}\cap \real$ and $b=\frac{4a^2}{(1+a^2)^2}$. Then the smooth projective model of the affine curve $y^4=(1-bx)x^2(1-x)^2$ is a genus two curve whose Jacobian is isogenous to a product of elliptic curves.
\end{prop}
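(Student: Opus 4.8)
The plan is to identify the curve in the statement with the curve $X_{b,2}$ already introduced in the transcendence discussion, and then to play its period (computed via hypergeometric functions in \ref{lghcak}) against the elliptic period produced by the Cassini oval computation, so that Proposition \ref{transperab}(1) forces $\mathrm{Jac}(X_{b,2})$ to acquire an elliptic isogeny factor. By \eqref{lenggr1} (together with the fact that $b$ depends on $a$ only through the combination $4a^2/(1+a^2)^2$, which is invariant under $a\mapsto 1/a$) we may assume $0<a<1$, so that the Cassini curve $E_a\colon y^2=x(1-x)(x^2-(1-a^4))$ has four distinct branch points and is genuinely an elliptic curve.

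First I would record that $X:=X_{b,2}$, the smooth projective model of $y^4=(1-bx)x^2(1-x)^2$, has genus $2$. This follows from Riemann--Hurwitz applied to the degree $4$ cyclic cover $x\colon X\to\mathbf{P}^1$: reading off the superelliptic ramification, the fibres over $x=1/b$ and over $x=\infty$ are totally ramified (one point, $e=4$), while those over $x=0$ and $x=1$ each consist of two points of ramification index $2$, so the total ramification is $3+3+2+2=10$ and $2g-2=4\cdot(-2)+10=2$, giving $g=2$. In particular $A:=\mathrm{Jac}(X)$ is an abelian surface, and since $a,b\in\bar{\rat}$ both $A$ and $E_a$ are defined over $\bar{\rat}$.

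Next I would exhibit a nonzero complex number lying in $V_A\cap V_{E_a}$. On the one hand the computation in \ref{lghcak} shows that $l(C_{a,2})$ is a nonzero $\bar{\rat}$-multiple of $\int_0^1 (1-bt)^{-1/4}t^{-1/2}(1-t)^{-1/2}\,\mathrm{d}t$, which, following \cite{wol88}, is a nonzero $\bar{\rat}$-multiple of a period of the holomorphic differential $\eta=\mathrm{d}x/y$ on $X$; hence $l(C_{a,2})\in V_A$. On the other hand the Cassini computation expresses $l(C_{a,2})=l(C_a)$ as a nonzero $\bar{\rat}$-multiple of the period $\int_\gamma\mathrm{d}x/y$ of $E_a$, so $l(C_{a,2})\in V_{E_a}$; the fact that these two expressions coincide is precisely the classical identity \eqref{hypgeoell}. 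Since $l(C_{a,2})>0$ we obtain $0\neq l(C_{a,2})\in V_A\cap V_{E_a}$.

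Finally, Proposition \ref{transperab}(1) then yields isogenous simple abelian subvarieties of $A$ and of $E_a$; as $E_a$ is an elliptic curve, it is itself isogenous to a simple abelian subvariety of $A$, so $A$ contains a one-dimensional abelian subvariety isogenous to $E_a$. Poincaré complete reducibility gives $A\sim E_a\times E'$ with $\dim E'=\dim A-1=1$, i.e. $E'$ an elliptic curve, so $A$ is totally split. The one step requiring genuine care—and the crux of the argument—is the verification that the two length integrals really are $\bar{\rat}$-multiples of periods of $\bar{\rat}$-rational differentials on $X$ and on $E_a$ respectively and that they represent the \emph{same} nonzero complex number; both points are furnished by the explicit computations above and by the identity \eqref{hypgeoell}, so that the remaining work is simply to assemble these ingredients with the transcendence-theoretic input of Proposition \ref{transperab}.
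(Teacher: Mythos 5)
Your proof is correct and follows essentially the same route as the paper: both arguments pit the hypergeometric expression for $l(C_{a,2})$ as a period of $\eta=\mathrm{d}x/y$ on $X_{b,2}$ against the elliptic-integral expression \eqref{lencasell}, and then invoke Proposition \ref{transperab}(1) to force an elliptic isogeny factor of the Jacobian. The only differences are cosmetic: you verify the genus by a direct Riemann--Hurwitz count where the paper cites \cite{archi03}, and you spell out the final Poincar\'e complete reducibility step that the paper leaves implicit.
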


\begin{proof}
Let $X$ be the smooth projective curve birational to $y^4=(1-bx)x^2(1-x)^2$. By \cite[Section 8, pag. 927]{archi03} the curve $X$ has genus 2. As explained above, the differential $\frac{\mathrm{d}x}{y}$ induces a holomorphic differential $\eta \in H^0(X, \Omega)$. We proved that the length $l(C_{a, 2})$ is an algebraic multiple of a period of $\eta$. On the other hand by equation \eqref{lencasell} this length is also an algebraic multiple of a period of an elliptic curve defined over $\bar{\rat}$. It follows from Proposition \ref{transperab} that the Jacobian of $X$ has an isogeny factor isomorphic to an elliptic curve, and the proof is complete.
\end{proof}

\bibliographystyle{amsalpha}
\bibliography{bibcurve}

\providecommand{\bysame}{\leavevmode\hbox to3em{\hrulefill}\thinspace}
\providecommand{\MR}{\relax\ifhmode\unskip\space\fi MR }
\providecommand{\MRhref}[2]{%
  \href{http://www.ams.org/mathscinet-getitem?mr=#1}{#2}
}
\providecommand{\href}[2]{#2}
\begin{thebibliography}{AAR99}

\bibitem[AAR99]{anasro00}
George~E. Andrews, Richard Askey, and Ranjan Roy, \emph{Special functions},
  Encyclopedia of Mathematics and its Applications, vol.~71, Cambridge
  University Press, Cambridge, 1999.

\bibitem[Aok91]{ao91}
Noboru Aoki, \emph{Simple factors of the {J}acobian of a {F}ermat curve and the
  {P}icard number of a product of {F}ermat curves}, American Journal of
  Mathematics \textbf{113} (1991), no.~5, 779--833.

\bibitem[Arc03]{archi03}
Nat\'{a}lia Archinard, \emph{Hypergeometric abelian varieties}, Canad. J. Math.
  \textbf{55} (2003), no.~5, 897--932.

\bibitem[Ber97]{ber97}
Bruce~C. Berndt, \emph{Ramanujan’s notebooks}, no. pt. 5, Springer New York,
  1997.

\bibitem[BLR90]{bolura90}
Siegfried Bosch, Werner L\"{u}tkebohmert, and Michel Raynaud, \emph{N\'{e}ron
  models}, Ergebnisse der Mathematik und ihrer Grenzgebiete (3) [Results in
  Mathematics and Related Areas (3)], vol.~21, Springer-Verlag, Berlin, 1990.

\bibitem[But91]{bu91}
James~P. Butler, \emph{The perimeter of a rose}, The American Mathematical
  Monthly \textbf{98} (1991), no.~2, 139--143.

\bibitem[BW08]{bawu08}
Alan Baker and Gisbert Wüstholz, \emph{Logarithmic forms and diophantine
  geometry}, New Mathematical Monographs, Cambridge Univ. Press, Leiden, 2008.

\bibitem[CGP15]{cogapra15}
Brian Conrad, Ofer Gabber, and Gopal Prasad, \emph{Pseudo-reductive groups},
  New Mathematical Monographs, Cambridge University Press, 2015.

\bibitem[CH14]{cohy14}
David~A. Cox and Trevor Hyde, \emph{The {G}alois theory of the lemniscate}, J.
  Number Theory \textbf{135} (2014), 43--59.

\bibitem[EGA84]{ega84}
M.~Elia and M.~T. Galizia~Angeli, \emph{The length of a lemniscate}, Publ.
  Inst. Math. (Beograd) (N.S.) \textbf{36(50)} (1984), 51--55.

\bibitem[EHP58]{erhepi58}
P.~Erd\H{o}s, F.~Herzog, and G.~Piranian, \emph{Metric properties of
  polynomials}, J. Analyse Math. \textbf{6} (1958), 125--148.

\bibitem[ES93]{es93}
Torsten Ekedahl and Jean-Pierre Serre, \emph{Exemples de courbes
  alg\'{e}briques \`a jacobienne compl\`etement d\'{e}composable}, C. R. Acad.
  Sci. Paris S\'{e}r. I Math. \textbf{317} (1993), no.~5, 509--513.

\bibitem[HW18]{huwu18}
Annette Huber and Gisbert Wüstholz, \emph{Transcendence and linear relations
  of 1-periods}, arXiv:1805.10104.

\bibitem[Kle04]{kle04}
Steven~L. Kleiman, \emph{What is {A}bel's theorem anyway?}, The legacy of
  {N}iels {H}enrik {A}bel, Springer, Berlin, 2004, pp.~395--440.

\bibitem[Kle05]{kle05}
\bysame, \emph{The {P}icard scheme}, Fundamental algebraic geometry, Math.
  Surveys Monogr., vol. 123, Amer. Math. Soc., Providence, RI, 2005,
  pp.~235--321.

\bibitem[KR78]{koro78}
Neal Koblitz and David Rohrlich, \emph{Simple factors in the {J}acobian of a
  {F}ermat curve}, Canadian Journal of Mathematics \textbf{30} (1978), no.~6,
  1183–1205.

\bibitem[KT00]{kuta00}
O.S. Kuznetsova and V.G. Tkachev, \emph{Analysis on lemniscates and
  {H}amburger’s moments},
  \url{https://www.math.kth.se/math/forskningsrapporter/Tkachev.pdf}.

\bibitem[Leg26]{leg26}
Adrien-Marie Legendre, \emph{Trait\'e des fonctions elliptiques}, vol.~2, 1826.

\bibitem[Lio45]{liou45}
Joseph Liouville, \emph{Rapport sur ce {M}émoire}, vol.~10, 1845.

\bibitem[Mat95]{ma95}
F.~P. Matz, \emph{The rectification of the cassinian oval by means of elliptic
  functions}, The American Mathematical Monthly \textbf{2} (1895), no.~7/8,
  221--223.

\bibitem[MP17]{mupi17}
Nicolas Muller and Richard Pink, \emph{Hyperelliptic curves with many
  automorphisms}, \url{https://arxiv.org/pdf/1711.06599.pdf}.

\bibitem[Ore18]{or18}
S.~Yu. Orevkov, \emph{Irreducibility of lemniscates},
  \url{https://arxiv.org/abs/1806.10377}.

\bibitem[Pau06]{paul06}
Jennifer Paulhus, \emph{Decomposing {J}acobians of hyperelliptic curves}, 2006.

\bibitem[PS97]{praso97}
V.V. Prasolov and I.U.P. Solov\_ev, \emph{Elliptic functions and elliptic
  integrals}, Memoirs of the American Mathematical Society, American
  Mathematical Soc., 1997.

\bibitem[Ros81]{ros81}
Michael Rosen, \emph{Abel's theorem on the lemniscate}, The American
  Mathematical Monthly \textbf{88} (1981), no.~6, 387--395.

\bibitem[Sch41]{sch41}
T.~Schneider, \emph{Zur {T}heorie des {A}belschen {F}unktionen und
  {I}ntegrale}, J. reine angew. Math. (1941), no.~183, 110--128.

\bibitem[Sch57]{sch57}
\bysame, \emph{Einf{\"u}hrung in die transzendenten {Z}ahlen}, Grundlehren der
  mathematischen Wissenschaften, Springer Berlin Heidelberg, 1957.

\bibitem[Ser42]{ser42}
Joseph~Alfred Serret, \emph{Note sur les intégrales eulériennes de seconde
  espèce}, Journal de Mathématiques pures et appliquées \textbf{7} (1842),
  114–119.

\bibitem[Ser43a]{ser43}
\bysame, \emph{Note sur les fonctions elliptiques de première espèce},
  Journal de Mathématiques pures et appliquées \textbf{8} (1843), 145--154.

\bibitem[Ser43b]{ser43a}
\bysame, \emph{Propriétés géométriques relatives à la théorie des
  fonctions elliptiques}, Journal de Mathématiques pures et appliquées
  \textbf{8} (1843), 495--501.

\bibitem[Ser45a]{ser45}
\bysame, \emph{Mémoire sur la représentation géométrique des fonctions
  elliptiques et ultra-elliptiques}, Journal de Mathématiques pures et
  appliquées \textbf{10} (1845), 257--285.

\bibitem[Ser45b]{ser45a}
\bysame, \emph{Note sur les courbes elliptiques de la première espèce},
  Journal de Mathématiques pures et appliquées \textbf{10} (1845), 421--429.

\bibitem[Ser46]{ser46}
\bysame, \emph{Théorie géométrique de la lemniscate et des courbes
  elliptiques de la première classe}, Journal de Mathématiques pures et
  appliquées \textbf{11} (1846), 89--95.

\bibitem[Ser12]{ser12}
J.P. Serre, \emph{Algebraic groups and class fields}, Springer New York, 2012.

\bibitem[Sma13]{sma13}
Ivahn Smadja, \emph{De la lemniscate au damier analytique: {L}egendre et le
  primat de l'analyse}, Les courbes---\'{e}tudes sur l'histoire d'un concept,
  Collect. Sci. Hist., Lib. Sci. Tech. Albert Blanchard, Paris, 2013,
  pp.~143--193.

\bibitem[ST61]{shta61}
G.~Shimura and Y.~Taniyama, \emph{Complex multiplication of abelian varieties
  and its applications to number theory}, Publications of the Mathematical
  Society of Japan, Mathematical Society of Japan, 1961.

\bibitem[Wol88]{wol88}
Jürgen Wolfart, \emph{Werte hypergeometrischer {F}unktionen}, Inventiones
  mathematicae \textbf{92} (1988), no.~1, 187--216.

\bibitem[WP06]{wa06}
Chunjie Wang and Lizhong Peng, \emph{The arc length of the lemniscate
  $\left|w^n\!+\!c\right|=1$}, Rocky Mountain J. Math. \textbf{36} (2006),
  no.~1, 337--347.

\bibitem[WS95]{shiwo95}
Jürgen Wolfart and Hironori Shiga, \emph{Criteria for complex multiplication
  and transcendence properties of automorphic functions}, Journal für die
  reine und angewandte Mathematik \textbf{463} (1995), 1--26.

\bibitem[WW85]{wowu85}
Jürgen Wolfart and Gisbert Wüstholz, \emph{Der {Ü}berlagerungsradius
  gewisser algebraischer {K}urven und die {W}erte der {B}etafunktion an
  rationalen {S}tellen}, Mathematische Annalen \textbf{273} (1985), 1–15.

\end{thebibliography}

\Addr

\end{document}